\documentclass[10pt,reqno]{amsart}
\usepackage[utf8]{inputenc}
\usepackage{fullpage, tikz-cd} 
\usepackage{graphicx}
\usepackage{enumitem} 
\usepackage{amsmath,amssymb,amsthm, amsfonts,verbatim, pdfpages}
\usepackage[colorlinks=true,hyperindex, linkcolor=blue, pagebackref=false, citecolor=cyan]{hyperref}
\usepackage{tikz}
\usepackage{cleveref}

\newcommand{\Z}{\mathbb{Z}}

\newcommand{\N}{\mathbb{N}}
\DeclareMathOperator{\Cone}{Cone}
\DeclareMathOperator{\Line}{line}
\DeclareMathOperator{\coker}{coker}
\DeclareMathOperator{\im}{im}

\newcommand{\xx}{\mathbf{x}}

\DeclareMathOperator{\intact}{ia}
\DeclareMathOperator{\extact}{ea}
\DeclareMathOperator{\IntAct}{IntAct}
\DeclareMathOperator{\ExtAct}{ExtAct}
\DeclareMathOperator{\wt}{wt}

\newcommand{\redden}[1]{\underline{\textcolor{red}{#1}}}
\newcommand{\CT}[2]{\mathrm{CT}_{#1,#2}}

\newcommand{\Coneplus}[1]{\mathrm{Cone}^{(+#1)}}

\newtheorem{theorem}{Theorem}[section]
\newtheorem{lemma}[theorem]{Lemma}
\newtheorem{definition}[theorem]{Definition}
\newtheorem{proposition}[theorem]{Proposition}
\newtheorem{corollary}[theorem]{Corollary}
\newtheorem{conjecture}[theorem]{Conjecture}
\newtheorem{question}[theorem]{Question}

\newtheorem{remark}[theorem]{Remark}

\begin{document}

\title{Sandpile groups for cones over trees}
\author{Victor Reiner}
\author{Dorian Smith}
\email{reiner@umn.edu, smi01055@umn.edu}
\address{School of Mathematics,
University of Minnesota -- Twin Cities}
\date{}

\thanks{Authors partially supported by NSF grant DMS-2053288.}
\subjclass{05C50, 05C25}
\keywords{tree, cone, sandpile, Laplacian, star, path, Fibonacci}

\maketitle

\begin{abstract}
Sandpile groups are a subtle graph isomorphism invariant, in the form of a finite abelian group,
whose cardinality is the number of spanning trees
in the graph.  We study their group structure
for graphs obtained by attaching a cone vertex to a tree.  For example, it is shown that 
the number of generators of the sandpile group is at
most one less than the number of leaves in the tree.
For trees on a fixed number of vertices,
the paths and stars are shown to
provide extreme behavior, not only for the
number of generators, but also for the number of
spanning trees, and for Tutte polynomial
evaluations that count the recurrent sandpile configurations by their numbers of chips. 
\end{abstract}

\section{Introduction}

This paper studies sandpile groups for graphs which are cones over trees.  We first explain some background on
sandpile groups, some motivation 
for examining cones over trees, and then our
main results.

\subsection{Laplacians and sandpile groups}

Let $G$ be a finite, undirected, connected graph $G=(V,E)$ on vertices $V$ and edges $E$, with parallel edges allowed, but no self-loops.
Its {\it sandpile group} $K(G)$ is an interesting isomorphism invariant, a finite abelian group whose cardinality is the number $\tau(G)$ of spanning trees in $G$.  It can be defined starting with
the {\it Laplacian matrix} in $\Z^{V \times V}$, whose rows and columns are both indexed by
the vertices $v$ in $V$,  with $(v,v')$-entry given by 
\begin{equation}
\label{Laplacian-definition}
(L_G)_{v,v'}:=
\begin{cases}
\deg_G(v) & \text{ if }v=v',\\
-\# \{ \text{edges from }v\text{ to }v' \}& \text{ if }v \neq v'.
\end{cases}
\end{equation}
This Laplacian matrix may also be viewed as $L_G=\partial \circ \partial^T$ where $\partial: \Z^E \longrightarrow Z^V$ is the cellular boundary map for any orientation of the edges of $G$.
From here, one picks any choice of root vertex $v_0$ in $V$,
to define a reduced Laplacian matrix $\overline{L}_G$ in $\Z^{(V - \{v_0\}) \times (V - \{v_0\})}$,
and then the {\it sandpile group} is its integer cokernel:
\begin{equation}
    \label{sandpile-group-presentation}
K(G):=\coker (\overline{L}_G :\Z^{V  - \{v_0\}} \rightarrow \Z^{V  - \{v_0\}})=
\Z^{V  - \{v_0\}}/\im \overline{L}_G.
\end{equation}

The name {\it sandpile group} comes from the dynamics of the
{\it abelian sandpile model} from physics, which leads to certain distinguished representatives $\xx=(x_v)_{v \in V-\{v_0\}}$ in $\Z^{V  - \{v_0\}}$ for the cosets $\xx+\im\overline{L}_G$, called {\it recurrent chip configurations}; see Corry and Perkinson \cite[Chap. 6]{CorryPerkinson} and Klivans \cite[Chap. 4]{Klivans} for more
background.  Letting $\N:=\{0,1,2,\ldots\}$, one thinks of vectors $\xx$  in $\N^{V  - \{v_0\}}$ as {\it valid chip configurations}, with $x_v$ denoting the number chips piled at vertex $v$;  one does not keep track of the
number of chips at the root vertex $v_0$, which one can think of as $+\infty$ .
Say that vertex $v$ is {\it ready to fire} in a valid chip configuration $\xx$ if $x_v \geq \deg_G(v)$.  Then {\it firing} $\xx$ at $v$
subtracts from $\xx$ the $v^{th}$ column of $\overline{L}_G$; equivalently, one thinks of vertex $v$ as passing one chip along each of its incident edges to its neighbors in $G$.  Say $\xx$ is a {\it stable} chip configuration if $0 \leq x_v \leq \deg_G(v)-1$ 
for all $v$ in $V - \{v_0\}$, so that no vertex in $V - \{v_0\}$ is ready to fire.  One calls
$\xx$ a {\it recurrent} (or {\it critical}) chip configuration if it is both stable and has the property that
after firing the root vertex $v_0$ (passing one chip along each edge incident to $v_0$ to its neighbors), there exists some sequence of firings at vertices in $V - \{v_0\}$ that leads back to $\xx$.   These recurrent configurations then give a system of coset representatives for  $K(G)$, that is,
\begin{equation}
\label{criticals-index-cosets}
K(G)=\{ \xx+\im \overline{L}_G: \text{ recurrent }\xx \in \N^{V-\{v_0\}}\}
\end{equation}
allowing one way to compute in the group.

Recurrent chip configurations also relate to an interesting isomorphism invariant of $G$, its {\it Tutte polynomial} $T_G(x,y)$, reviewed in Section~\ref{Tutte-polynomial-section} below. Recall that $T_G(x,y)$ is a bivariate polynomial in $\Z[x,y]$ with nonnegative coefficients, which can
be defined either via a deletion-contraction recurrence, or as a bivariate generating function for the spanning trees $T \subset E$ according to two statistics.
Thus $T_G(1,1)=|K(G)|=\tau(G)$,
the spanning trees number.
More generally, one has the following result of C. Merino \cite{Merino}, conjectured by N. Biggs, showing $T_G(1,y)$ is (up to a power of $y$)
the generating function $\sum_{\xx} y^{\wt(\xx)}$ for recurrent configurations $\xx$ counted by
their number of chips or {\it weight}
$\wt(\xx):=\sum_{v \in V -\{v_0\}} x_v$;
see \cite[\S 3.3]{Klivans}, \cite[\S 14.4]{CorryPerkinson}.

\vskip.1in
\noindent
{\bf Theorem.} \cite[Thm. 3.6]{Merino}
{\it
For a graph $G$ with $m$ edges, and any root vertex $v_0$, one has
\begin{equation}
\label{Merino-theorm}
T_G(1,y) = 
y^{\deg_G(v_0)-m} \cdot
\sum_{\xx} y^{\wt(\xx)}.
\end{equation}
where in the sum here, $\xx$ runs over all recurrent chip configurations with respect to the root $v_0$.
}
\vskip.2in

\subsection{Motivation for cones over trees}

This paper was motivated by the following slightly vague question.
\vskip.1in
\noindent
{\bf Main Question.}
{\it 
How does the``shape" of the graph $G$ relate to the structure of its
sandpile group $K(G)$, including
the cardinality $\tau(G)=|K(G)|$, the minimal number of generators $\mu(G)$ for $K(G)$, 
and the Tutte evaluation $T_G(1,y)$ counting the recurrent configurations
from \eqref{criticals-index-cosets} by their number of chips?
}
\vskip.1in

The large literature on sandpile groups mostly leaves this unaddressed, focusing more on these aspects:
\begin{itemize}
\item Computing $K(G)$ explicitly for various special families of graphs, e.g.,  complete graphs \cite{Biggs} and their Cartesian products \cite{Bai, JNR}, complete bipartite \cite{Lorenzini} and multipartite graphs \cite{JNR}, circulant graphs, \cite{MednykhMednykh}, wheels \cite{Biggs}, threshold graphs \cite{ChristiansonR}, polygon chains and rings \cite{ChenMohar}, rook graphs \cite{DuceyGerhardWatson} Paley graphs \cite{ChandlerSinXiang}, cyclotomic strongly regular graphs \cite{Pantangi}, abelian group Cayley graphs \cite{DuceyJalil, GMMY},
conference graphs \cite{Lorenzini2}, and Erd\H{o}-R\'enyi random graphs $G(n,p)$ for fixed $p$ as $n$ approaches infinity \cite{Wood},
\item Understanding behavior of $K(G)$ under graph constructions, e.g., planar duality \cite{CoriRossin}, edge-subdivision/edge-duplication \cite{Lorenzini}, taking line graphs of regular graphs \cite{BMMPR}, forming covering spaces \cite{ReinerTseng}, taking quotients by dihedral groups \cite{GlassMerino}, adding a cone vertex to a Cartesian product of graphs \cite{AlfaroValencia}, adding cone vertices iteratively to graphs \cite{GoelPerkinson}.
\end{itemize}

Here are a few results that do address the above Main Question: 
\begin{itemize}
    \item[(a)] The presentation of $K(G)$ in \eqref{sandpile-group-presentation} shows that $\mu(G) \leq |V|-1$,
\item[(b)]  A dual presentation of $K(G)$ in terms of cycles \cite[\S 29]{Biggs-longpaper} 
shows that $\mu(G) \leq |E|-|V|+1$.  
\item[(c)] These two bounds on $\mu(G)$
are the {\it rank} and {\it corank} of the {\it graphic matroid} $M_G$
for $G$, and Wagner \cite{Wagner} proved that the group structure of $K(G)$ is a {\it matroid isomorphism invariant} of $M_G$.
\end{itemize}

Our goal is to start addressing the Main Question above
for the family of
graphs $G$ which are {\it cones over trees}:  $G=\Cone(T)$ is
obtained from a tree $T$ on vertex set $\{v_1,\ldots,v_n\}$ by adding a {\it cone vertex} $v_0$, with one edge $\{v_0,v_i\}$
for each $i=1,2,\ldots,n$.  Fixing $n$, all such graphs $G=\Cone(T)=(V,E)$ have 
\begin{align*}
    |V|-1&=n,\\ 
    |E|-|V|+1&=n-1,
\end{align*}
so that the above two upper bounds (a),(b) on $\mu(G)$, are fixed, and approximately equal.  This allows one to  focus on variation due to other parameters of the ``shape" of the tree $T$, such as its number of leaves, its degree sequence, etc.  We hope that trends in $K(G)$ for
$G=\Cone(T)$ may illuminate such trends for {\it all} graphs $G$; see
the discussion in Section~\ref{remarks-section}
below.

A bit more motivation comes from an observation of Alfaro and Valencia \cite{AlfaroValencia} that graphs of the form $\Cone(H)$ have
the cone vertex $v_0$ as an obvious distinguished candidate for the role
of the root vertex when doing chip-firing;  they examined $K(\Cone(H))$ in the case where $H=H_1 \times H_2$ is a Cartesian product graph.  Relatedly, Goel and Perkinson \cite{GoelPerkinson} studied how the sandpile group $K(G)$ changes when one takes iterated cones over $G$.  Lastly, Proudfoot and Ramos \cite{ProudfootRamos}
studied the topology of configuration spaces of
points in $\Cone(T)$ for trees $T$, showing
interesting behavior when one contracts edges in $T$.

\subsection{Main results}
Our first main result partly addresses the Main Question about $\mu(G)$ for cones on trees.  To state it, let $\{e_v\}$ be the standard basis vectors
for $\Z^{V - \{v_0\}}$, and denote by
$\{ \overline{e}_v \}$ their images within
the quotient presentation
of the sandpile group 
$K(G)=\Z^{V-\{v_0\}}/\im \overline{L}_G$
from \eqref{sandpile-group-presentation}.

\begin{theorem}
\label{generators}
For a tree $T$ on at least two vertices with $\ell(T)$ leaves, one has
$
\mu(\Cone(T)) \leq \ell(T)-1.
$

More precisely, choosing the cone vertex of $\Cone(T)$ as the root 
vertex $v_0$, the group $K(\Cone(T))$
is generated by 
$
\{\overline{e}_v: \text{ leaves }v \neq v_1\}
$
for any choice of a leaf vertex $v_1$ in $T$.
\end{theorem}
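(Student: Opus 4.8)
The plan is to show that each standard generator $\overline{e}_v$ of the sandpile group, as $v$ ranges over the \emph{non-leaf} vertices of $T$ (together with one chosen leaf $v_1$), can be rewritten in terms of the $\overline{e}_w$ for the remaining leaves $w \neq v_1$, using the relations imposed by the columns of $\overline{L}_{\Cone(T)}$. Recall that $K(\Cone(T)) = \Z^{V - \{v_0\}}/\im \overline{L}_{\Cone(T)}$, so that for each tree vertex $v$, the corresponding column of the reduced Laplacian gives a relation in the group: reading off \eqref{Laplacian-definition}, the diagonal entry at $v$ is $\deg_{\Cone(T)}(v) = \deg_T(v) + 1$ (the $+1$ coming from the cone edge), and the off-diagonal entries record $-1$ for each neighbor of $v$ inside $T$. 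Since we have rooted at the cone vertex $v_0$, the cone edges contribute only to the diagonal, not to any off-diagonal entry among $V - \{v_0\}$. Thus the relation at $v$ reads
\begin{equation}
\label{plan-relation}
(\deg_T(v)+1)\,\overline{e}_v \;=\; \sum_{w \sim_T v} \overline{e}_w,
\end{equation}
where $w \sim_T v$ means $w$ is a neighbor of $v$ in the tree $T$.

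The key idea is to exploit the tree structure by \textbf{rooting $T$ at the chosen leaf $v_1$} and then processing the vertices from the leaves inward, i.e.\ in order of \emph{decreasing} distance from $v_1$. First I would root $T$ at $v_1$, which orients every edge and gives each non-root vertex $v$ a unique parent $p(v)$ and a (possibly empty) set of children. I claim that by downward induction on the vertices (processing those farthest from $v_1$ first), every $\overline{e}_v$ with $v$ a \emph{non-leaf} vertex can be expressed as a $\Z$-linear combination of the $\overline{e}_w$ over leaves $w \neq v_1$. The base case handles leaves $v \neq v_1$ trivially, as they are among the claimed generators. For the inductive step at a non-leaf $v$: all children of $v$ are strictly farther from $v_1$, so by induction each child's basis element already lies in the span of leaf-generators; then the relation \eqref{plan-relation} at $v$, namely $(\deg_T(v)+1)\overline{e}_v = \overline{e}_{p(v)} + \sum_{c \text{ child of } v}\overline{e}_c$, can be \emph{solved for} $\overline{e}_{p(v)}$, expressing the parent's element in terms of $\overline{e}_v$ and the children's elements—all of which lie in the desired span. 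Crucially, the coefficient being solved for is $\pm 1$ (the off-diagonal Laplacian entry linking $v$ to $p(v)$), so no division is needed and the computation stays within $\Z$.

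A cleaner way to organize the same argument, which I would actually prefer to write up, is to \textbf{induct on the order $|V(T)|$ of the tree by pruning a leaf}. Pick any leaf $u \neq v_1$ and let $p(u)$ be its unique neighbor; let $T' = T - u$. The relation \eqref{plan-relation} at $u$ reads $2\,\overline{e}_u = \overline{e}_{p(u)}$ (since a leaf has tree-degree $1$, giving cone-degree $2$, and its only tree-neighbor is $p(u)$), which immediately shows $\overline{e}_{p(u)} \in \langle \overline{e}_u \rangle$. More usefully, one compares the sandpile group of $\Cone(T)$ with that of $\Cone(T')$: deleting the leaf $u$ amounts to a reduction of the reduced Laplacian presentation, and one checks that $K(\Cone(T))$ is generated by $\overline{e}_u$ together with (the images of) a generating set for $K(\Cone(T'))$. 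By induction the latter is generated by the leaves of $T'$ other than $v_1$, and the leaves of $T$ are exactly those of $T'$ together with $u$ (with the caveat that $p(u)$ may have \emph{become} a leaf in $T'$ if $u$ was its only child, which must be tracked carefully).

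\textbf{The main obstacle} I anticipate is precisely this leaf-count bookkeeping in the pruning argument: when $u$ is removed, its neighbor $p(u)$ may turn into a leaf of $T'$, so the leaf set does not simply lose one element, and one must verify that the generator $\overline{e}_{p(u)}$ newly contributed by $T'$ is either redundant or is exactly compensated by $\overline{e}_u$ via the relation $2\overline{e}_u = \overline{e}_{p(u)}$. For this reason the downward-induction approach of the second paragraph, which solves \eqref{plan-relation} for the parent element directly and never changes the ambient tree, is likely the safer route to a fully rigorous argument; the integrality ($\pm 1$ coefficients) is what makes it work and is worth emphasizing. The inequality $\mu(\Cone(T)) \leq \ell(T) - 1$ then follows immediately, since exhibiting a generating set of size $\ell(T)-1$ bounds the minimal number of generators from above.
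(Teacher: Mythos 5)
Your preferred route (the second paragraph) is essentially the paper's own proof: the paper likewise roots $T$ at the chosen leaf $v_1$ (via the partial order in which $v \leq v'$ iff $v'$ lies on the path from $v$ to $v_1$) and, to handle a given vertex, invokes the reduced-Laplacian column at one of its \emph{children}---whose off-diagonal entry at the parent is $-1$---so that the parent's generator is rewritten integrally in terms of elements strictly farther from $v_1$; your emphasis on the $\pm 1$ coefficient is exactly the right point. The only thing to repair is your induction bookkeeping: as literally stated, your claim is about $\overline{e}_v$ for non-leaf $v$, yet the step ``at $v$'' establishes $\overline{e}_{p(v)}$ rather than $\overline{e}_v$, and it needs $\overline{e}_v$ as an input. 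Since you only perform the step at non-leaf vertices, a non-leaf vertex all of whose children are leaves (e.g.\ the center of the star $S_n$) is never established, and neither is $\overline{e}_{v_1}$ itself, which the theorem also requires. The fix is already in your own text: the relation at a \emph{leaf} $u$, namely $2\overline{e}_u = \overline{e}_{p(u)}$, establishes the parent of a leaf; so either run the step at every vertex $v \neq v_1$ (leaves included), or, cleaner, index the induction by the vertex $u$ being established and invoke the relation at one of its children---which is precisely how the paper organizes it. Your alternative pruning argument (inducting on $|V(T)|$ by deleting a leaf) is the genuinely problematic one, since there is no natural homomorphism relating $K(\Cone(T-u))$ to $K(\Cone(T))$: deleting $u$ also lowers the diagonal entry of the reduced Laplacian at $p(u)$, so the smaller presentation does not sit inside the larger one; but you correctly flagged this and declined to rely on it.
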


Our remaining results focus on bounds
derived from 
two extreme families of trees:
\begin{itemize}
    \item The
{\it path graph} $P_n$ with $n$ vertices.
Its cone graph $\Cone(P_n)$ is also called a {\it fan graph} \cite{bogdanowicz2008formulas, SELIG2023103663}.

\item The {\it star graph} $S_n=K_{1,n-1}$ with $n-1$ leaves, a complete bipartite graph.  The cone graph $\Cone(S_n)=K_{1,1,n-1}$, a complete tripartite graph, is also called a {\it thagomizer graph}  \cite{ProudfootRamos}.
\end{itemize}

Depicted below are the path $P_6$, the star $S_9$, 
and also the $p=6,s=5$ example from the interpolating family of {\it coconut trees} $\CT{p}{s}$ that will play a unifying role (and cautionary tale) for some of our results.  The {\it coconut tree} $\CT{p}{s}$ for $p,s \geq 1$ 
is obtained from a path on $p$ vertices $\pi_1,\pi_2,\ldots,\pi_p$ by adding $s$ leaf vertices $\sigma_1,\sigma_2,\ldots,\sigma_s$ attached to $\pi_p$ as their unique neighbor.

\begin{center}
\begin{minipage}{.2\textwidth}
 \hspace*{-0.6\linewidth}
\begin{tikzpicture}
  [scale=.30,auto=left,every node/.style={circle,fill=black!20}]
 \node (pp1) at (-13,12) {$\pi_1$};
  \node (pp2) at (-10,12) {$\pi_2$};
  \node (pp3) at (-7,12) {$\pi_3$};
  \node (pp4) at (-4,12) {$\pi_4$};
  \node (pp5) at (-1,12) {$\pi_5$};
  \node (pp6) at (2,12) {$\pi_6$};
  \foreach \from/\to in {pp6/pp5,pp5/pp4,pp4/pp3,pp3/pp2,pp2/pp1}
    \draw (\from) -- (\to);
    \node (ss1) at (15,16) {$\sigma_1$};
  \node (ss2) at (18,16) {$\sigma_2$};
  \node (ss3) at (18,12) {$\sigma_3$}; 
  \node (ss4) at (18,8) {$\sigma_4$}; 
  \node (ss5) at (15,8) {$\sigma_5$};
  \node (ss6) at (12,8) {$\sigma_6$};
  \node (ss7) at (12,12) {$\sigma_7$};
  \node (ss8) at (12,16) {$\sigma_8$};
   \node (ss9) at (15,12) {$\sigma_9$};
\foreach \from/\to in {ss9/ss1,ss9/ss2,ss9/ss3,ss9/ss4,ss9/ss5,ss9/ss6,ss9/ss7,ss9/ss8}
    \draw (\from) -- (\to);
 \node (p1) at (-10,0) {$\pi_1$};
  \node (p2) at (-7,0) {$\pi_2$};
  \node (p3) at (-4,0) {$\pi_3$};
  \node (p4) at (-1,0) {$\pi_4$};
  \node (p5) at (2,0) {$\pi_5$};
  \node (p6) at (5,0) {$\pi_6$};
  \node (s1) at (5,4) {$\sigma_1$};
  \node (s2) at (8,4) {$\sigma_2$};
  \node (s3) at (8,0) {$\sigma_3$}; 
  \node (s4) at (8,-4) {$\sigma_4$}; 
  \node (s5) at (5,-4) {$\sigma_5$}; 
  \foreach \from/\to in {p6/p5,p5/p4,p4/p3,p3/p2,p2/p1,p6/s1,p6/s2,p6/s3,p6/s4,p6/s5}
    \draw (\from) -- (\to);
\end{tikzpicture}
\end{minipage}
\end{center}

The second main result shows paths and stars give bounds on the minimal number of generators $\mu(\Cone(T))$. 

\begin{theorem}
\label{generators-bounds-theorem}
For trees $T$ on $n \geq 3$ vertices, the minimal
number of generators for
$K(\Cone(T))$ satisfies
$$
\begin{array}{cccccl}
\mu(\Cone(S_n)) &\geq& \mu(\Cone(T))&\geq &\mu(\Cone(P_n))\\
\Vert& & & &\Vert\\
n-2& & & &1.
\end{array}
$$
\end{theorem}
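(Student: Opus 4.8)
The plan is to reduce everything to Theorem~\ref{generators} together with a single explicit computation for the star, so I would begin by separating the proof into the two generic inequalities and the identification of the extreme values. First I would record the combinatorial fact that a tree $T$ on $n \geq 3$ vertices has at most $n-1$ leaves, with equality exactly when $T=S_n$: since $\sum_v \deg_T(v)=2(n-1)$, having all $n$ vertices be leaves would force $n=2$, and exactly $n-1$ leaves forces a single remaining vertex adjacent to all of them. Substituting $\ell(T)\leq n-1$ into Theorem~\ref{generators} immediately gives the upper bound $\mu(\Cone(T))\leq \ell(T)-1\leq n-2$. For the generic lower bound I would observe that $\Cone(T)$ has cyclomatic number $|E|-|V|+1=n-1\geq 2$, hence contains cycles, so $\tau(\Cone(T))=|K(\Cone(T))|>1$ and $\mu(\Cone(T))\geq 1$. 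Thus both inequalities in the display hold, and it remains to pin down the two endpoint values.

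The value at the path is then nearly free: $P_n$ has exactly two leaves, so Theorem~\ref{generators} gives $\mu(\Cone(P_n))\leq \ell(P_n)-1=1$, while nontriviality gives $\mu(\Cone(P_n))\geq 1$, hence equality. (Alternatively one sees this directly: deleting $v_0$ leaves a tridiagonal reduced Laplacian with all off-diagonal entries $-1$, and solving the successive relations expresses each $\overline{e}_{i+1}$ as an integer multiple of $\overline{e}_1$, so $K(\Cone(P_n))$ is cyclic.)

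The real content is $\mu(\Cone(S_n))=n-2$. Writing the vertices of $S_n$ as a center $c$ and leaves $\sigma_1,\ldots,\sigma_{n-1}$ and taking $v_0$ as root, the reduced Laplacian has the block form $\left(\begin{smallmatrix} n & -\mathbf{1}^T \\ -\mathbf{1} & 2I \end{smallmatrix}\right)$ with $I$ of size $(n-1)\times(n-1)$. The upper bound $\mu\leq n-2$ is Theorem~\ref{generators} once more, since $\ell(S_n)=n-1$. For the matching lower bound I would compute the $2$-rank: reducing modulo $2$ annihilates the $2I$ block and turns each $-1$ into $1$, leaving a matrix whose $n-1$ rows indexed by the $\sigma_i$ all equal $(1,0,\ldots,0)$ (coordinates ordered as $(c,\sigma_1,\ldots,\sigma_{n-1})$) and whose $c$-row is $(n \bmod 2,\,1,\ldots,1)$; subtracting multiples of the first shows these span a $2$-dimensional space over $\mathbb{F}_2$, independent of the parity of $n$. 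Hence $\dim_{\mathbb{F}_2}\!\big(K(\Cone(S_n))/2K(\Cone(S_n))\big)=n-2$, and since $\mu(A)\geq \dim_{\mathbb{F}_2}(A/2A)$ for every finite abelian group $A$, we conclude $\mu(\Cone(S_n))\geq n-2$, giving equality.

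The main obstacle is precisely this \emph{lower} bound for the star, since everything else is immediate from Theorem~\ref{generators} and a degree count. I expect the cleanest route to be the $\mathbb{F}_2$-rank argument above rather than a full Smith-normal-form reduction; the latter also works and yields $K(\Cone(S_n))\cong \Z/2(n+1)\Z \oplus (\Z/2\Z)^{n-3}$, consistent with the spanning-tree count $|K(\Cone(S_n))|=2^{n-2}(n+1)$, but it is more laborious than necessary, as only the $2$-rank is needed to control $\mu$.
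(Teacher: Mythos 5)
Your proof is correct, and it departs from the paper's argument in exactly the one place where real work is needed. The paper deduces both endpoint values from Theorem~\ref{coconut-theorem}: it computes the full group structures $K(\Cone(P_n))\cong\Z_{F_{2n}}$ and $K(\Cone(S_n))\cong\Z_2^{n-3}\oplus\Z_{2(n+1)}$ (the latter via the relation-matrix and integral-equivalence analysis of Section~\ref{coconut-part2-section}, which itself invokes Theorem~\ref{generators}), and then reads off $\mu(\Cone(P_n))=1$ and $\mu(\Cone(S_n))=n-2$. You instead get everything except the star's lower bound straight from Theorem~\ref{generators} plus nontriviality of the group: $\ell(T)\leq n-1$ (with equality iff $T=S_n$, by your degree-sum argument) gives the generic upper bound $n-2$, and $\ell(P_n)=2$ gives cyclicity for the fan, so nontriviality pins down $\mu(\Cone(P_n))=1$ without ever identifying the cyclic group as $\Z_{F_{2n}}$. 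For the star's lower bound you reduce the reduced Laplacian $\left(\begin{smallmatrix} n & -\mathbf{1}^T\\ -\mathbf{1} & 2I\end{smallmatrix}\right)$ modulo $2$ and observe its $\mathbb{F}_2$-rank is exactly $2$ regardless of the parity of $n$, whence $\dim_{\mathbb{F}_2}\bigl(K/2K\bigr)=n-2$ and $\mu\geq\dim_{\mathbb{F}_2}(K/2K)$; this computation is right (the $\sigma_i$-rows become $(1,0,\ldots,0)$, the $c$-row becomes $(n\bmod 2,1,\ldots,1)$, and these span a $2$-dimensional space), and the identification $K/2K\cong\mathbb{F}_2^{\,n}/\im(\overline{L}_{G}\bmod 2)$ it relies on is standard. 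The trade-off: your route is shorter and self-contained, needing only the $2$-rank rather than a Smith-normal-form-style reduction, while the paper's route costs more but delivers the complete sandpile groups (and spanning-tree counts) for the whole coconut-tree family, of which Theorem~\ref{generators-bounds-theorem} then falls out as a corollary. Your argument determines $\mu$ exactly but says nothing further about the group structure.
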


Theorem~\ref{generators-bounds-theorem} follows almost immediately by combining
Theorem~\ref{generators} with the following result,
that determines the spanning tree number and the sandpile group structure for
cones over paths, stars (and coconut trees).
To state it, introduce the abbreviation $\Z_m:=\Z/m\Z$ for finite cyclic groups.  
Also  recall
the {\it Fibonacci numbers}
$\{F_n\}_{n=0,1,2,\ldots}$  defined by
$F_0=0,F_1=1$ and $F_n=F_{n-1}+F_{n-2}$ for $n \geq 2$.  Lastly, for integers $p,s \geq 1$, let
\begin{align*}
m_{p,s}:=(s-2)F_{2p-1}+2F_{2p+1}=sF_{2p-1}+2F_{2p}.
\end{align*}

\begin{theorem}
\label{coconut-theorem}
For $p,s \geq 1$, the cone over the coconut tree $\CT{p}{s}$ has the following:
\begin{itemize}
\item[(i)]
$
\tau(\Cone(\CT{p}{s}))= 2^{s-1} m_{p,s}
$
\item[(ii)]
$
K(\Cone(\CT{p}{s})) \cong
\begin{cases}
    \Z_2^{s-1} \oplus \Z_{m_{p,s}} 
    & \text{ if }s=1\text{ or }p \equiv 2 \bmod{3},\\
    \Z_2^{s-2} \oplus \Z_{2m_{p,s}} 
    & \text{ if }s \geq 2\text{ and }p \equiv 0,1 \bmod{3}.
\end{cases}
$
\end{itemize}
\end{theorem}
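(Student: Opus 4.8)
The plan is to compute directly with the reduced Laplacian $\overline{L}$ of $\Cone(\CT{p}{s})$, rooted at the cone vertex $v_0$, ordering the remaining vertices as $\pi_1,\ldots,\pi_p,\sigma_1,\ldots,\sigma_s$. Since every leaf $\sigma_j$ has degree $2$ in $\Cone(\CT{p}{s})$ and is adjacent only to $\pi_p$ and to $v_0$, the matrix acquires the block form
$$\overline{L}=\begin{pmatrix} A & B\\ B^{T} & 2I_s\end{pmatrix},$$
where $A$ is the $p\times p$ tridiagonal matrix with diagonal $(2,3,\ldots,3,s+2)$ and off-diagonal $-1$'s, and $B$ is the $p\times s$ matrix whose only nonzero row is the $\pi_p$-row, equal to $(-1,\ldots,-1)$.

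For part (i) I would take the Schur complement with respect to the invertible block $2I_s$. Since $BB^{T}=s\,E_{pp}$, the matrix unit in the $(\pi_p,\pi_p)$ corner, one gets $\det\overline{L}=2^{s}\det(A-\tfrac12 BB^T)=2^{s}\det A'$, where $A'$ is $A$ with its bottom-right entry changed from $s+2$ to $2+\tfrac{s}{2}$. Expanding $\det A'$ along the last row reduces it to the tridiagonal determinants $D_k$ with diagonal $(2,3,\ldots,3)$, which satisfy $D_k=3D_{k-1}-D_{k-2}$ and hence $D_k=F_{2k+1}$. A short Fibonacci manipulation, using $2F_{2p-1}-F_{2p-3}=F_{2p}$, then yields $\det A'=F_{2p}+\tfrac{s}{2}F_{2p-1}$, so that $\det\overline{L}=2^{s-1}(2F_{2p}+sF_{2p-1})=2^{s-1}m_{p,s}$, which is $\tau(\Cone(\CT{p}{s}))$ by the Matrix--Tree theorem. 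This also serves as a consistency check on part (ii), since the orders of both claimed groups multiply to $2^{s-1}m_{p,s}$.

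For part (ii) I would pass to a small presentation. By Theorem~\ref{generators}, choosing the leaf $v_1=\pi_1$, the group is generated by $\overline{e}_{\sigma_1},\ldots,\overline{e}_{\sigma_s}$. The relations from the interior rows $\pi_1,\ldots,\pi_{p-1}$ have unit subdiagonal entries, so they may be used to eliminate $\overline{e}_{\pi_2},\ldots,\overline{e}_{\pi_p}$ by unimodular operations; solving the resulting linear recurrence gives $\overline{e}_{\pi_i}=F_{2i-1}\,\overline{e}_{\pi_1}$. Writing $f=F_{2p-1}$, $F=F_{2p}$, and $M=sf+F$, the surviving relations (from the $\sigma_j$-rows and the $\pi_p$-row) become $2\,\overline{e}_{\sigma_j}=f\,\overline{e}_{\pi_1}$ and $\sum_j \overline{e}_{\sigma_j}=M\,\overline{e}_{\pi_1}$. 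These assemble into an $(s+1)\times(s+1)$ integer relation matrix on the generators $\overline{e}_{\pi_1},\overline{e}_{\sigma_1},\ldots,\overline{e}_{\sigma_s}$; the $-1$ in the $\pi_p$-row supplies a unit pivot, and after clearing it and one further row reduction the problem collapses to the Smith normal form of the $s\times s$ matrix whose first row is $(m,0,\ldots,0)$ and whose $j$-th row (for $2\le j\le s$) has $-f$ in the first column and $2$ in column $j$, where $m=m_{p,s}$.

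The heart of the argument, and the main obstacle, is this final Smith normal form, which I would extract via gcd's of minors. A direct incidence count shows every nonzero $(s-1)\times(s-1)$ minor equals $\pm 2^{s-1}$, $\pm m\,2^{s-2}$, or $\pm f\,2^{s-2}$, while the full determinant is $2^{s-1}m$ and the gcd of all entries is $\gcd(m,f,2)$. The case split is exactly the parity of $f=F_{2p-1}$: since $F_n$ is even iff $3\mid n$, one has $f$ even $\iff p\equiv 2\bmod 3$. Using $\gcd(F_{2p},F_{2p-1})=1$ to reduce $\gcd(m,f)$ to $\gcd(2,f)$, the invariant factors come out as $2,2,\ldots,2,m$ (with $m$ even) when $f$ is even, giving $\Z_2^{s-1}\oplus\Z_m$, and as $1,2,\ldots,2,2m$ when $f$ is odd, giving $\Z_2^{s-2}\oplus\Z_{2m}$. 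What remains is boundary bookkeeping: the case $s=1$ leaves the $1\times 1$ matrix $(m)$ and hence $\Z_m$ for every $p$ (matching the ``$s=1$'' clause), and the degenerate case $p=1$, where $\CT{1}{s}$ is a star, must be checked against the same formulas, its single path diagonal entry being $s+1$ rather than $2$.
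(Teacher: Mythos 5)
Your proposal is correct in its overall strategy and reaches the theorem by a genuinely different route in several places, but there is one identifiable gap at the final step. For part (i), the paper never computes a determinant: it proves the count by leaf-induction on $p$, using the Tutte deletion-contraction recurrences \eqref{specialized-leaf-del-con}, \eqref{specialized-doubled-leaf-del-con} at $x=y=1$ to track $\tau(\Cone(\CT{p}{s}))$ and $\tau(\Coneplus{\pi_1}(\CT{p}{s}))$ simultaneously, with the star case as base. Your Matrix--Tree/Schur-complement computation $\det\overline{L}=2^{s}\det\bigl(A-\tfrac12 BB^{T}\bigr)$ with the tridiagonal recursion $D_k=3D_{k-1}-D_{k-2}$, $D_k=F_{2k+1}$, is correct and more direct; what the paper's route buys is reuse of the same deletion-contraction lemmas in the proof of Theorem~\ref{tutte-bounds-theorem}. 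For part (ii), you and the paper funnel the problem to the \emph{identical} $s\times s$ matrix ($M'$ in the paper, with first row $(m_{p,s},0,\ldots,0)$ and later rows $(-F_{2p-1},0,\ldots,2,\ldots,0)$): your recurrence $\overline{e}_{\pi_i}=F_{2i-1}\overline{e}_{\pi_1}$ is exactly the paper's Lemma~\ref{CT-trunk-lemma} in solved form, resting on the same identity $3F_{2i-1}-F_{2i-3}=F_{2i+1}$. The difference is how one justifies that the cokernel of $M'$ is all of $K$: the paper assembles selected relations into a non-square array, does column operations, and then needs a cardinality argument (the surjection $\Z^s\twoheadrightarrow K$ is an isomorphism because $|\det M'|=2^{s-1}m_{p,s}=\tau$), which makes part (ii) depend on part (i) and on Theorem~\ref{generators}. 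Your elimination uses unit pivots throughout, so the cokernel is preserved at every step and your part (ii) is logically self-contained; that is a real simplification of the logical structure.

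The gap is in your Smith normal form extraction when $f=F_{2p-1}$ is odd. The three quantities you cite --- $d_1=\gcd(\text{entries})=1$, $d_{s-1}=\gcd\bigl((s-1)\text{-minors}\bigr)=2^{s-2}$, and $d_s=2^{s-1}m$ --- do \emph{not} determine the group. For instance, when $s=4$ and $m$ is even, the invariant-factor chains $(1,1,4,2m)$ and $(1,2,2,2m)$ both give $d_1=1$, $d_3=4$, $d_4=8m$, so a $\Z_4$ summand is not excluded by your stated data. (In the even case your data does suffice: $d_1=2$ forces every invariant factor to be even, and then $d_{s-1}=2^{s-1}$ forces the first $s-1$ of them to equal $2$.) The fix is easy and stays within your method: the same incidence count you describe for $(s-1)$-minors shows that every nonzero $k\times k$ minor of $M'$ with $k\le s-1$ lies in $\{\pm 2^k,\ \pm f2^{k-1},\ \pm m2^{k-1}\}$, hence $d_k=2^{k-1}\gcd(2,f,m)$ for all $k\le s-1$, which pins down every invariant factor in both parity cases. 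Alternatively, reduce $M'$ modulo $2$: when $f$ is odd its $\mathbb{F}_2$-rank is $1$, so exactly one invariant factor is odd, and combined with $d_{s-1}$ and $d_s$ this forces the chain $(1,2,\ldots,2,2m)$. The paper sidesteps determinantal divisors entirely by explicit column operations on $M'$ (adding $\tfrac{f}{2}$, respectively $\tfrac{f-1}{2}$, times each later column to the first, then a unit-pivot reduction), which diagonalizes directly; either repair makes your version complete.
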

\noindent
In particular, taking $(p,s)=(n-1,1)$ or $(p,s)=(1,n-1)$, respectively, gives
the spanning tree counts and sandpile
group structures for fans $\Cone(P_n)$ and thagomizer graphs
$\Cone(S_n)$:
\begin{align}
\label{fan-spanning-tree-number}
 \tau(\Cone(P_n))&=F_{2n} \,\, \text{ for }n \geq 1,\\
\label{thagomizer-spanning-tree-number}  
\tau(\Cone(S_n)) &= 2^{n-2} \cdot (n+1)
\,\, \text{ for }n \geq 2,\\
\notag& \\
\label{fan-sandpile-group}
 K(\Cone(P_n))&\cong\Z_{F_{2n}} \,\, \text{ for }n \geq 1,\\
\label{thagomizer-sandpile-group}  
K(\Cone(S_n)) &\cong \Z_2^{n-3} \oplus \Z_{2(n+1)}
\,\, \text{ for }n \geq 3.
\end{align}

\noindent
In fact, the spanning tree counts
\eqref{fan-spanning-tree-number}, \eqref{thagomizer-spanning-tree-number} were known previously; see
\cite{bogdanowicz2008formulas, SELIG2023103663}, and  \cite[eqn. (1)]{JNR}.

Our last main result gives coefficientwise
bounds on the 
Tutte polynomial evaluations $T_{\Cone(T)}(1,y)$, again coming from paths and stars.  However, note that the inequalities are {\it reversed} (!) compared to Theorem~\ref{generators-bounds-theorem}.
 
\begin{theorem}
    \label{tutte-bounds-theorem}
    For a tree $T$ on $n \geq 2$ vertices, 
    the weight enumerator of recurrent chip configurations
    $T_{\Cone(T)}(1,y)$ from \eqref{Merino-theorm} satisfies
the following coefficientwise inequalities as
polynomials in $\Z[y]$:
\begin{equation}
   \label{coefficentwise-Tutte-comparison}
   T_{\Cone(S_n)}(1,y) \,\,  \leq \,\, T_{\Cone(T)}(1,y) \,\, \leq \,\, T_{\Cone(P_n)}(1,y).
\end{equation}
In particular, setting $y=1$, this implies
\begin{equation}
\label{Urschel-result}
\begin{array}{cccccl}
\tau(\Cone(S_n)) &\leq& \tau (\Cone(T))&\leq &\tau(\Cone(P_n))\\
\Vert& & & &\Vert\\
2^{n-2}(n+1)& & & &F_{2n}
\end{array}
\end{equation}
\end{theorem}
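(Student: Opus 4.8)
The plan is to reduce the coefficientwise comparison to a purely combinatorial statement about recurrent configurations, make that statement explicit via Dhar's burning algorithm, and then prove the two bounds through a leaf-peeling recurrence in which paths and stars appear as the extreme trees.

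First I would rewrite $T_{\Cone(T)}(1,y)$ as a weight enumerator. For $G=\Cone(T)$ one has $\deg_G(v_0)=n$ and $m=|E|=2n-1$, so Merino's theorem \eqref{Merino-theorm} gives $T_{\Cone(T)}(1,y)=y^{1-n}\sum_{\xx}y^{\wt(\xx)}$ over recurrent $\xx$. Dhar's burning criterion says $\xx$ is recurrent exactly when, starting a fire at $v_0$ (which pre-burns one neighbor of every tree vertex), the fire spreads to all of $T$; reparametrizing by the \emph{threshold} $f_v:=\deg_T(v)-x_v\in\{0,\dots,\deg_T(v)\}$, this becomes a bootstrap process on $T$ alone: burn a vertex $v$ once at least $f_v$ of its tree-neighbors are burnt, seeded by the vertices with $f_v=0$. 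Call such $f$ \emph{firing-complete}. Since $\wt(\xx)=\sum_v(\deg_T(v)-f_v)=2(n-1)-|f|$ with $|f|:=\sum_v f_v$, and since a counting-by-edges argument forces $|f|\le n-1$ on firing-complete $f$, I obtain
\[
T_{\Cone(T)}(1,y)\;=\;\sum_{f}\,y^{\,(n-1)-|f|},
\]
so, writing $a_k(T)$ for the number of firing-complete $f$ with $|f|=k$, the theorem is equivalent to the assertion $a_k(S_n)\le a_k(T)\le a_k(P_n)$ for every $k$.

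The engine is a leaf-peeling recurrence. Removing a leaf $v$ with neighbor $p$ to form $T'=T-v$, I analyze the two choices $f_v\in\{0,1\}$: when $f_v=1$ the leaf merely burns after $p$ and contributes a factor $z$, while $f_v=0$ pre-burns $v$ and lowers $p$'s effective threshold by one. Setting $A_T(z):=\sum_f z^{|f|}$ and letting $A^{0}_{T,p}(z)$ be the sub-sum over firing-complete assignments with $f_p=0$, this bookkeeping yields
\[
A_T(z)\;=\;A^{0}_{T',p}(z)\;+\;2z\,A_{T'}(z).
\]
Attaching successive leaves at the growing endpoint $e$ of a path gives the pair of relations $A_{P_{n+1}}=A^{0}_{P_n,e}+2z\,A_{P_n}$ and $A^{0}_{P_{n+1},e'}=A^{0}_{P_n,e}+z\,A_{P_n}$ (a Fibonacci-type transfer), whereas attaching at the fixed center $c$ of a star gives $A_{S_{k+1}}=A^{0}_{S_k,c}+2z\,A_{S_k}$ with $A^{0}_{S_k,c}(z)=(1+z)^{k-1}$, since $f_c=0$ frees every leaf. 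These recursions pin down the two candidate extremal values exactly.

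To compare a general $T$ with these extremes, the key observation is that the term $2z\,A_{T'}(z)$ is independent of the attaching vertex, so the whole dependence on the shape of $T$ is concentrated in the pinned term $A^{0}_{T',p}(z)$. I would therefore prove, via a local ``shift'' move that slides a pendant subtree so as to make the tree more path-like (respectively more star-like), that $A_T(z)$ varies coefficientwise monotonically, with $P_n$ and $S_n$ the unique extremes reachable by such shifts; for a single move this amounts to an explicit weight-preserving injection between the corresponding sets of firing-complete assignments. Equivalently, one can run a strengthened induction on $n$ that carries $A^{0}_{T,p}(z)$ alongside $A_T(z)$ and shows simultaneously that this pinned generating function is coefficientwise maximized when $p$ is a path-endpoint and minimized when $p$ is a maximum-degree vertex.

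The hard part is exactly this control of the pinned term $A^{0}_{T',p}(z)$: the crude bound $A^{0}_{T',p}\le A_{T'}$ collapses both sides to $(1+2z)A_{T'}$ and destroys the comparison, so one must show that attaching at a low-degree (path-end) vertex genuinely maximizes the coefficientwise contribution of assignments with $f_p=0$, while attaching at a high-degree vertex minimizes it. Establishing this attachment-point monotonicity—either as the injection underlying a single shift move or as the auxiliary inductive hypothesis—is the crux of the argument; its $y=1$ specialization recovers the spanning-tree comparison in \eqref{Urschel-result}, which serves as a consistency check, and the coincidence $P_3=S_3$ confirms that both bounds must degenerate for small $n$.
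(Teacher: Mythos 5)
Your reduction is correct and genuinely different from the paper's route: via Merino's theorem and Dhar's burning criterion you turn the claim into a bootstrap-percolation count on $T$ itself, and your leaf-peeling recurrence $A_T(z)=A^{0}_{T',p}(z)+2z\,A_{T'}(z)$ checks out (the case $f_v=0$ contributes $A^{0}_{T',p}+zA_{T'}$ after the fiber analysis of $f_p\mapsto\max(f_p-1,0)$, and the case $f_v=1$ contributes $zA_{T'}$). In fact, on the star side your framework nearly closes the argument by itself: rooting $T'$ at $p$ and taking any $0/1$-assignment with $f_p=0$ is always firing-complete (burn top-down from the seed $p$), which gives the coefficientwise bound $A^{0}_{T',p}(z)\geq (1+z)^{n-2}=A^{0}_{S_{n-1},c}(z)$ for \emph{every} tree $T'$ and every vertex $p$; combined with induction this yields inequality $T_{\Cone(S_n)}(1,y)\leq T_{\Cone(T)}(1,y)$. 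By contrast, the paper proves this inequality by a spanning-tree/external-activity argument (Lemma~\ref{star-inequality-lemma}), exhibiting $2^{n-2}$ trees of external activity zero; your injection is arguably cleaner.

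The genuine gap is on the path side, and you have named it yourself: the claim that $A^{0}_{T',p}(z)$ is coefficientwise maximized when $T'$ is a path and $p$ an endpoint is asserted, not proved, and it is the entire content of inequality (B). Neither of your two suggested routes is carried out. The ``shift move'' route asks for coefficientwise monotonicity of $T_{\Cone(T)}(1,y)$ under local tree moves; monotonicity under Csikv\'ari's generalized tree shifts is precisely the paper's Conjecture 7.1, which was open when the paper was written and was only proved later by Ding --- so this route proves something \emph{stronger} than the theorem and cannot be waved through. The strengthened-induction route can work, but it needs two ingredients you do not supply: first, a structural handle on the pinned term when $p$ is internal --- in your language, if $f_p=0$ then the process decouples across the branches $B_1,\dots,B_r$ at $p$, giving the multiplicativity $A^{0}_{T',p}(z)=\prod_i A^{0}_{B_i\cup\{p\},\,p}(z)$; and second, a product inequality bounding such products by the path value, which is exactly the role played in the paper by Lemma~\ref{Fibonacci_ineq}, the coefficientwise bound $F_n(y)F_m(y)\leq F_{n+m-1}(y)$ for $n,m$ not both even, inside the simultaneous induction of Lemma~\ref{intertwining-induction-lemma} (whose part (ii), on $T_{\Cone(T)/\{v_0,v\}}(1,y)\leq F_{2n-1}(y)$, is the cone-contraction avatar of your pinned generating function). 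Until you state and prove the analogues of these two lemmas in your bootstrap formulation, the upper bound $T_{\Cone(T)}(1,y)\leq T_{\Cone(P_n)}(1,y)$ remains unestablished.
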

\noindent
The authors thank J. Urschel \cite{Urschel} for proofs of both numerical inequalities in \eqref{Urschel-result} when they were still conjectural.  These proof ideas generalized, with some work, to prove 
the coefficientwise inequalities \eqref{coefficentwise-Tutte-comparison}.

One can say more about the polynomials
appearing as the lower and upper bounds in \eqref{coefficentwise-Tutte-comparison}.
For example, Corollary~\ref{fan-doubled-fan-corollary} will show that
$
T_{\Cone(P_n)}(1,y)=F_{2n}(y),
$
where $F_n(y)$ is defined via a Fibonacci-like recursion:
\begin{align}
\notag
    F_0(y)&:=0, \,\, F_1(y):=1, \\
\label{Fibonacci-polynomial-definition}
F_n(y) &:= 
\begin{cases}
F_{n-1}(y) + F_{n-2}(y) &\text{ for }n\text{ even,}\\
F_{n-1}(y) + yF_{n-2}(y) &\text{ for }n\text{ odd.}
\end{cases}
\end{align}
Interestingly,
Selig \cite[Thm. 5.12]{SELIG2023103663}
gives a combinatorial interpretation for
the expansion coefficients of $F_{2n}(y)=T_{\Cone(P_n)}(1,y)$, but his methods are purely bijectve, and the above Fibonacci-like recurrence for $F_n(y)$ seems to play no role there. 
Meanwhile, we will show in Lemma~\ref{star-inequality-lemma} below that
$
T_{\Cone(S_n)}(1,y)=S_n(y),
$
where $S_n(y)$ is defined via this recursion:
\begin{align}
\notag
S_1(y)&:=1,\\
\label{star-polynomial-definition}
S_n(y)&:=(y+1) S_{n-1}(y) + 2^{n-2} \text{ for }n \geq 2.
\end{align}

 The structure of the paper is as follows.
 Section~\ref{generators-section} proves Theorem~\ref{generators}.  Section~\ref{Tutte-polynomial-section} reviews Tutte polynomials 
 and then proves a few deletion-contraction lemmas relevant for cones on trees.
 Section~\ref{coconut-part1-section}
 uses these lemmas to prove the spanning tree counts for coconut trees asserted in Theorem~\ref{coconut-theorem}(i), while Section~\ref{coconut-part2-section} employs Theorem~\ref{generators}
 to compute their sandpile groups asserted in
 Theorem~\ref{coconut-theorem}(ii).
 Section~\ref{tutte-bounds-section} proves
 Theorem~\ref{tutte-bounds-theorem}.
 Section~\ref{remarks-section} contains
 further discussion and questions.

\section{Proof of Theorem~\ref{generators}}
\label{generators-section}

Recall the statement of the theorem,
involving the images 
$\{ \overline{e}_v \}$ of the  standard basis vectors
for $\Z^{V - \{v_0\}}$ within the
sandpile group presented via
$K(G)=\Z^{V-\{v_0\}}/\im \overline{L}_G$
as in \eqref{sandpile-group-presentation}.

\vskip.1in
\noindent
{\bf Theorem~\ref{generators}}
{\it 
For a tree $T$ on at least two vertices with $\ell(T)$ leaves, one has
$$
\mu(\Cone(T)) \leq \ell(T)-1.
$$

More precisely, choosing the cone vertex $v_0$ in $\Cone(T)$ as the root 
vertex, the group $K(\Cone(T))$
is generated by 
$
\{\overline{e}_v: \text{ leaves }v \neq v_1\}
$
for any choice of a leaf vertex $v_1$ in $T$.}
\vskip.1in

\begin{proof}
Use the choice of the leaf vertex $v_1$ to define a partial order $<$ on the vertex set $V - \{v_0\}$ of $T$, decreeing $v \leq v'$ if $v'$ 
lies on the unique path from the leaf $v$ to the root $v_1$ in $T$.  The
partial order has
$v_1$ as its unique $<$-maximal element, and the $<$-minimal elements are the remaining $\ell(T)-1$ leaves $v \neq v_1$ of $T$.  

We claim that it suffices to show, for each {\it $<$-nonminimal} vertex $v$ of $T$
that  $\overline{e}_v$ can be
expressed in $K(G)$ as lying in the $\Z$-span of $\{ \overline{e}_{v'}: v' \lneqq v$\}.  This would then show, by induction on any linear extension of $<$, that every vertex $u$ in $V -\{v_0\}$ has
$\overline{e}_u$ lying in the $\Z$-span of 
$\{ 
\overline{e}_{v} : <\text{-minimal }v
\}
=\{ \overline{e}_{v} : \text{leaves }v \neq v_1 \}$.

Assume that $v$ is a $<$-non-minimal vertex of $T$, so that $v$ has at least one neighboring vertex $v'$ in $T$ with $v' < v$.  In other words, $v'$ is a child of $v$ when regarding $v_1$ as the root of the tree $T$.  Note that all other neighbors $v'' \neq v$ of $v'$ in $T$ are children of $v'$
and satisfy $v'' < v$, since both $v',v$  lie on their unique path from $v''$ to the root $v_1$ in $T$.  See this example:

\begin{center}
\begin{minipage}{.2\textwidth}
 \hspace*{-0.6\linewidth}
\begin{tikzpicture}
  [scale=.28,auto=left,every node/.style={circle,fill=black!20}]

  \node (v1) at (15,16) {$v_1$};
  \node (v) at (18,8) {$v$}; 
  \node (v') at (15,4) {$v'$}; 
  \node (v1'') at (11,0) {$v_1''$}; 
  \node (v2'') at (15,0) {$v_2''$}; 
  \node (v3'') at (19,0) {$v_3''$}; 
  \node (d) at (21,4) {$ $}; 
  \node (e) at (11,-4) {$ $}; 
  \node (f) at (19,-4) {$ $}; 
  \node (a) at (12,8) {$ $};
  \node (c) at (12,4) {$ $};
  \node (b) at (15,12) {$ $};
\foreach \from/\to in {v1/b,b/a,b/v,v/v',a/c,v'/v1'',v'/v2'',v'/v3'',v/d,v1''/e,v3''/f}
    \draw (\from) -- (\to);
\end{tikzpicture}
\end{minipage}
\end{center}

Then $\bar{e}_v$ is equal in $K(G)$ to the result of adding the $(v')^{th}$ column of $\overline{L}_{\Cone(T)}$ to itself:
$$
\bar{e}_v 
\equiv \bar{e}_v + 
\left( 
\deg_{\Cone(T)}(v') \cdot \bar{e}_{v'}
- \sum_{ \substack{v'' \in V-\{v_0\}:\\ \{v',v''\} \in T}} \bar{e}_{v''}
\right)
\equiv
\deg_{\Cone(T)}(v') \cdot \bar{e}_{v'}
- \sum_{ \substack{v'' \in V-\{v_0, v'\}:\\ \{v',v''\} \in T}} \bar{e}_{v''}
$$
This achieves the goal, since all of the $v',v''$ appearing on the far right have $v',v''<v$.
\end{proof}

The bound in Theorem~\ref{generators}
raises the following question.

\begin{question} 
\label{mu-bound-question}
Which trees $T$ 
    achieve equality in the bound of Theorem~\ref{generators}, that is,
    $\mu(\Cone(T))=\ell(T)-1$?
\end{question}

\noindent
We suspect the answer is subtle.  Remark~\ref{coconut-tree-mu-remark} below deduces from Theorem~\ref{coconut-theorem} that
{\it coconut trees} $T=\CT{p}{s}$ always have $\mu(\Cone(T))=\ell(T)-1$ or $\ell(T)-2$,
but the distinction between the two cases is already slightly tricky. 
Other examples show that $\mu(\Cone(T))$ need not always
lie in $\{\ell(T)-1,\ell(T)-2\}$.
For example, the tree $T$
below has $\ell(T)=4$ but $K(\Cone(T)) = \Z_{332}$,
so $\mu(\Cone(T))=1 = \ell(T)-3$:

\begin{center}
\begin{minipage}{.2\textwidth}
 \hspace*{-0.6\linewidth}
\begin{tikzpicture}
  [scale=.20,auto=left,every node/.style={circle,fill=black!20}]
  \node (v1) at (-10,0) {};
  \node (v2) at (-15,3) {};
  \node (v3) at (-15,-3) {};
  \node (v4) at (-5,0) {}; 
  \node (v5) at (0,3) {};
   \node (v6) at (0,-3) {};
    \node (v7) at (5,-3) {};
  \foreach \from/\to in {v1/v4,v1/v2,v1/v3,v4/v5,v4/v6,v6/v7}
    \draw (\from) -- (\to);
\end{tikzpicture}
\end{minipage}
\end{center}
Small examples of complete binary 
trees $T$ suggest
$\ell(T) - \mu(\Cone(T))$ can grow arbitrarily large.
We know of no lower bound of the form
$\mu(\Cone(T)) \geq f(\ell(T))$ for some increasing and unbounded function $f(x)$.  
\begin{question}
    Are there trees
$\{T_n\}_{n=1,2,\ldots}$ all having $\mu(\Cone(T_n))=1$
where $\ell(T_n)$ grows without bound?
\end{question}

\section{The Tutte polynomial and deletion-contraction lemmas}
\label{Tutte-polynomial-section}

We recall two definitions of the
Tutte polynomial $T_G(x,y)$, and derive recurrences for cones over trees.

\subsection{Two definitions of the Tutte polynomial}
\label{tutte-review-section}

The first definition of $T_G(x,y)$ is recursive.

\begin{definition} \rm
\label{recursive-definition-of-Tutte-polynomia}
For a graph $G=(V,E)$ an edge $e$ is
a {\it loop} if its endvertices are equal,
and a {\it coloop} (or {\it isthmus} or {\it bridge}) if its deletion $G \setminus e$
has one more connected component
than $G$.

One can then define the {\it Tutte polynomial} $T_G(x,y) \in \mathbb{Z}[x,y]$ recursively on $|E|$, via these rules:

\begin{equation}
    \label{Tutte-recursive-definition}
T_G(x,y) = 
\begin{cases}
T_{G\setminus e}(x,y)+ T_{G/e}(x,y)
&\text{if  }e \text{ is neither a loop nor a coloop}, \\
y \cdot T_{G \setminus e}(x,y)
&\text{if  e is a loop.}\\
x \cdot T_{G/e} (x,y)
&\text{if  e is a bridge.}\\
1&\text{if }G\text{ has no edges, that is, }E=\varnothing.
\end{cases}
\end{equation}
\end{definition}
It can be shown that $T_G(x,y)$ has the following
property:  when $v$ is a cut-vertex in $G$, meaning that
its deletion $G-v$ has several components with vertex sets $C_1,C_2,\ldots,C_m$, then
\begin{equation}
\label{cut-vertex-Tutte-fact}
T_G(x,y) = \prod_{i=1}^m T_{G_i}(x,y)
\end{equation}
where $G_i$ denotes the vertex-induced subgraph of $G$ on vertex set $C_i \cup \{v\}$.

The definition above does not make it clear that $T_G(x,y)$ is well-defined, independent of deletions and contractions used to compute it recursively.  This can be 
remedied by Tutte's original definition from \cite{Tutte}, recalled here, as a bivariate generating function for spanning trees $T \subset E$,
counted by two statistics; see, e.g., Klivans \cite[\S 3.2.3]{Klivans}.

\begin{definition} \rm
\label{Tuttes-definition}
Fix any linear ordering $\prec$ of the edges $E$ of $G$.  Then one can define for any spanning tree $T \subset E$ its subset of {\it $\prec$-internally active} edges, and the set of {\it $\prec$-externally active} edges in $E - T$  as follows:
\begin{align}
\IntAct_\prec(T)
&:=\{e \in T: 
e \prec f \text{ whenever }(T  - e) \cup f\text{ is a spanning tree}\},
\\
\ExtAct_\prec(T)&:=\{f \in  
E - T: 
f \prec e \text{ whenever }(T  - e) \cup f\text{ is a spanning tree}\}.
\end{align}
The cardinalities of these sets are called the {\it internal activity} and {\it external activity} of $T$ \begin{align}
    \intact_\prec(T)&:=|\IntAct_\prec(T)|,\\
    \extact_\prec(T)&:=|\ExtAct_\prec(T)|,
\end{align}
and one can define $T_G(x,y)$ as this bivariate generating function, summing over spanning trees $T$ of $G$:
\begin{equation}
    \label{Tutte-activity-definition}
T_G(x,y) = \sum_T x^{\intact_\prec(T)} y^{\extact_\prec(T)}.
\end{equation}
\end{definition}
Setting $x=1$ in \eqref{Tutte-activity-definition} gives a consequence for later use (in Section~\ref{tutte-bounds-section}, proof of Lemma~\ref{star-inequality-lemma}):
\begin{equation}
    \label{ext-act-formula}
T_G(1,y)=\sum_T y^{\extact_\prec(T)}.
\end{equation}

\begin{remark} \rm
We remark that Definition~\ref{Tuttes-definition} of
$T_G(x,y)$ looks like it depends upon the ordering $\prec$, but Tutte showed that it does not.  There is also a third well-known definition of $T_G(x,y)$ that avoids both issues present in Definitions~\ref{recursive-definition-of-Tutte-polynomia}, ~\ref{Tuttes-definition}, namely Crapo's {\it corank-nullity formula} for $T_G(x,y)$ \cite{Crapo}.
\end{remark}

\subsection{Some deletion-contraction lemmas for cones of trees}

Many proofs involving trees $T$ use a leaf-induction strategy, in which picks a leaf vertex $\ell$ of $T$, and then considers the
subtree $T-\ell$ on one fewer vertex, obtained by deleting $\ell$ along with its unique incident edge.  
In computing Tutte polynomials of cones over trees with cone vertex $v_0$, the leaf-induction strategy is useful for performing deletion and contraction of $\Cone(T)$ along the edge $e=\{v_0,\ell\}$, leading one to consider the following auxiliary family of graphs.

\begin{definition} \rm
Given a vertex $v$ in a tree $T$, define the graph $\Coneplus{v}(T)$ to be the graph obtained from $\Cone(T)$ by creating a second parallel copy of the edge $\{v_0,v\}$.
\end{definition}

\begin{proposition}
For $T$ a tree, $\ell$ a leaf vertex of $T$, and $v$ the unique neighbor vertex of $\ell$ in $T$, one has
\begin{align}
\label{leaf-del-con}
T_{\Cone(T)}(x,y)
&=x \cdot T_{\Cone(T -\ell)}(x,y)
 + T_{\Coneplus{v}(T-\ell)}(x,y),\\
 \label{doubled-leaf-del-con}
 T_{\Coneplus{\ell}(T)}(x,y)
&=T_{\Cone(T)}(x,y)
 + y \cdot T_{\Coneplus{v}(T-\ell)}(x,y).
\end{align}
Therefore upon setting $x=1$, one has
\begin{align} 
\label{specialized-leaf-del-con}
 T_{\Cone(T)}(1,y)
&=T_{\Cone(T -\ell)}(1,y)
 + T_{\Coneplus{v}(T-\ell)}(1,y),\\
\label{specialized-doubled-leaf-del-con}
T_{\Coneplus{\ell}(T)}(1,y)
&=T_{\Cone(T)}(1,y)
 + y \cdot T_{\Coneplus{v}(T-\ell)}(1,y).
\end{align}
\end{proposition}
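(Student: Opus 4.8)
The plan is to prove both polynomial identities \eqref{leaf-del-con} and \eqref{doubled-leaf-del-con} directly from the recursive Definition~\ref{recursive-definition-of-Tutte-polynomia}, by applying deletion-contraction to a single well-chosen edge in each case and then recognizing the two resulting graphs as members of our families $\Cone(\,\cdot\,)$ and $\Coneplus{v}(\,\cdot\,)$. The two specialized identities \eqref{specialized-leaf-del-con} and \eqref{specialized-doubled-leaf-del-con} then follow at once upon setting $x=1$, so the real content lies in the first two equations.

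For \eqref{leaf-del-con}, I would apply deletion-contraction to the cone edge $e = \{v_0, \ell\}$ in $\Cone(T)$. Since $\ell$ remains joined to the rest of the graph through its tree-edge $\{v, \ell\}$, the edge $e$ is neither a loop nor a coloop, so $T_{\Cone(T)} = T_{\Cone(T) \setminus e} + T_{\Cone(T)/e}$. In the deletion $\Cone(T) \setminus e$, the vertex $\ell$ is now pendant, attached only by $\{v,\ell\}$, which is therefore a coloop; contracting it via the coloop rule (contributing the factor $x$) absorbs $\ell$ into $v$ and leaves exactly $\Cone(T - \ell)$, giving the term $x \cdot T_{\Cone(T-\ell)}$. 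In the contraction $\Cone(T)/e$, identifying $v_0$ with $\ell$ carries $\ell$'s tree-edge $\{v, \ell\}$ over to a second edge joining $v_0$ to $v$, parallel to the existing cone edge $\{v_0, v\}$; the result is precisely $\Coneplus{v}(T - \ell)$, giving the second term.

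For \eqref{doubled-leaf-del-con}, I would start from $\Coneplus{\ell}(T)$ and apply deletion-contraction to one of the two parallel copies $e'$ of the doubled edge $\{v_0, \ell\}$. Because the other copy still joins $v_0$ to $\ell$, the edge $e'$ is again neither a loop nor a coloop. Deleting $e'$ removes the doubling and returns plain $\Cone(T)$, yielding the term $T_{\Cone(T)}$. Contracting $e'$ identifies $v_0$ with $\ell$: as before this doubles the edge from $v_0$ to $v$ (producing $\Coneplus{v}(T-\ell)$), but now the surviving second copy of $\{v_0, \ell\}$ becomes a loop at the merged vertex. Applying the loop rule to remove it contributes the factor $y$, giving the term $y \cdot T_{\Coneplus{v}(T - \ell)}$.

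I expect the only delicate point to be the multigraph bookkeeping under contraction—specifically, tracking how identifying $v_0$ with $\ell$ simultaneously creates a parallel edge to $v$ in both equations and, in \eqref{doubled-leaf-del-con}, converts the surviving parallel copy into a loop. Once the identifications of $\Cone(T)/e$ and $\Coneplus{\ell}(T)/e'$ are made carefully, the identities are immediate, and the $x=1$ specializations require no further argument.
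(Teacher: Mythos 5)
Your proposal is correct and follows essentially the same route as the paper's proof: deletion-contraction on the cone edge $\{v_0,\ell\}$ (respectively, on one copy of the doubled edge in $\Coneplus{\ell}(T)$), identifying the deletion as $\Cone(T-\ell)$ plus a coloop $\{v,\ell\}$ and the contraction as $\Coneplus{v}(T-\ell)$ (with an extra loop in the second identity, yielding the factor $y$). The multigraph bookkeeping you flag as the delicate point is exactly the content of the graph isomorphisms the paper records, so nothing is missing.
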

\begin{proof}
It suffices to prove \eqref{leaf-del-con}, \eqref{doubled-leaf-del-con}.
To prove \eqref{leaf-del-con},
we delete and contract on the
edge $e=\{v_0,\ell\}$ in the graph $\Cone(T)$.
The deletion $\Cone(T)\setminus e$ is isomorphic to  $\Cone(T-\ell)$ with an extra
leaf $\ell$ attached to the unique neighbor $v$ in $T$.  This makes the edge $\{\ell,v\}$ a coloop in $\Cone(T)\setminus e$, and hence by \eqref{Tutte-recursive-definition}, one has
$$
T_{\Cone(T)\setminus e}(x,y)=x \cdot T_{\Cone(T-\ell)}(x,y).
$$
On the other hand, the contraction $\Cone(T)/ e$ is isomorphic to  $\Coneplus{v}(T-\ell)$.  Hence \eqref{Tutte-recursive-definition} implies \eqref{leaf-del-con}.

Similarly, to prove \eqref{doubled-leaf-del-con},
note that if one deletes and contracts on one of the two parallel copies of the edge $e=\{v_0,\ell\}$ in the graph $\Coneplus{\ell}(T)$,
the deletion $\Coneplus{\ell}(T)\setminus e$ is isomorphic to  $\Cone(T)$.  The contraction $\Coneplus{\ell}(T)/e$ is isomorphic to $\Coneplus{v}(T-\ell)$ with a loop attached to the cone vertex, so \eqref{Tutte-recursive-definition} says
$$
T_{\Coneplus{\ell}(T)/e}(x,y) = y \cdot T_{\Coneplus{v}(T-\ell)}(x,y).
$$
Hence \eqref{Tutte-recursive-definition} similarly implies \eqref{doubled-leaf-del-con}.
\end{proof}

As a corollary, we deduce an assertion from the Introduction.  Recall that
\eqref{Fibonacci-polynomial-definition} 
defined a sequence of polynomials
$F_n(y)$ in $\Z[y]$ via a generalization
of the Fibonacci recurrence:
$$
F_0(y):=0, F_1(y):=1, \quad
F_n(y) := 
\begin{cases}
F_{n-1}(y) + F_{n-2}(y) &\text{ for }n\text{ even,}\\
F_{n-1}(y) + yF_{n-2}(y) &\text{ for }n\text{ odd.}
\end{cases}
$$

\begin{corollary}
\label{fan-doubled-fan-corollary}
Consider the path $P_n$ for $n \geq 1$,
and $\ell$ an end vertex of $P_n$. Then
\begin{align*}
T_{\Cone(P_n)}(1,y)&=F_{2n}(y),\\
T_{\Coneplus{\ell}(P_n)}(1,y)&=F_{2n+1}(y).
\end{align*}
\end{corollary}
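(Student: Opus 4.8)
The plan is to prove both identities simultaneously by induction on $n$, using the two specialized deletion-contraction recurrences \eqref{specialized-leaf-del-con} and \eqref{specialized-doubled-leaf-del-con} as the engine, and matching them against the piecewise Fibonacci-polynomial recurrence \eqref{Fibonacci-polynomial-definition}. The key observation is that when $T = P_n$ and $\ell$ is an end vertex, deleting $\ell$ gives $T - \ell = P_{n-1}$, and the unique neighbor $v$ of $\ell$ is an end vertex of $P_{n-1}$. So both recurrences express quantities for $P_n$ in terms of quantities for $P_{n-1}$, and crucially the auxiliary term $T_{\Coneplus{v}(P_{n-1})}(1,y)$ appearing on the right of \eqref{specialized-leaf-del-con} is exactly the $\Coneplus{\ell}$-quantity for $P_{n-1}$ that the second identity controls.

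Concretely, I would set $a_n := T_{\Cone(P_n)}(1,y)$ and $b_n := T_{\Coneplus{\ell}(P_n)}(1,y)$ (for $\ell$ an end vertex), and claim $a_n = F_{2n}(y)$ and $b_n = F_{2n+1}(y)$. The two recurrences become
\begin{align*}
a_n &= a_{n-1} + b_{n-1},\\
b_n &= a_n + y\,b_{n-1},
\end{align*}
where in the first line I have rewritten the auxiliary term via $T_{\Coneplus{v}(P_{n-1})}(1,y) = b_{n-1}$. Substituting the claimed closed forms, the first equation reads $F_{2n}(y) = F_{2n-2}(y) + F_{2n-1}(y)$, which is the even-index case of \eqref{Fibonacci-polynomial-definition} (since $2n$ is even, $F_{2n} = F_{2n-1} + F_{2n-2}$). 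The second equation reads $F_{2n+1}(y) = F_{2n}(y) + y\,F_{2n-1}(y)$, which is the odd-index case of \eqref{Fibonacci-polynomial-definition} (since $2n+1$ is odd, $F_{2n+1} = F_{2n} + y\,F_{2n-1}$). Thus the recurrences match the definition of $F_n(y)$ exactly, index for index.

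What remains is the base case and a small bookkeeping check. For $n = 1$, the path $P_1$ is a single vertex, so $\Cone(P_1)$ is a single edge; I would compute $a_1 = T_{\Cone(P_1)}(1,y) = 1 = F_2(y)$ directly from the Tutte recurrence (one bridge), and $b_1 = T_{\Coneplus{\ell}(P_1)}(1,y)$, which is a single pair of parallel edges (a multigraph on two vertices with two edges), giving $b_1 = T(1,y) = 1 + y = F_3(y)$ after one deletion-contraction (one loop plus one bridge). I expect the main (though modest) obstacle to be the $n=1$ base case and confirming the identification $T_{\Coneplus{v}(T-\ell)}(1,y) = b_{n-1}$: I must verify that $\Coneplus{v}(P_{n-1})$, where $v$ is the end vertex of $P_{n-1}$ adjacent to the deleted leaf, is genuinely the graph $\Coneplus{\ell'}(P_{n-1})$ for an end vertex $\ell'$, i.e.\ that doubling the cone edge at an \emph{end} of the path is what the recurrence supplies. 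Since $v$ is indeed an endpoint of $P_{n-1}$, this holds, and the induction closes.
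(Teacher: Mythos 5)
Your proposal is correct and follows essentially the same route as the paper: the paper also runs a simultaneous induction on $n$, defining $\hat{F}_{2n}(y):=T_{\Cone(P_n)}(1,y)$ and $\hat{F}_{2n+1}(y):=T_{\Coneplus{\ell}(P_n)}(1,y)$ (your $a_n$ and $b_n$), observing that \eqref{specialized-leaf-del-con} and \eqref{specialized-doubled-leaf-del-con} turn into exactly the even and odd cases of the recursion \eqref{Fibonacci-polynomial-definition}, and checking the same base cases $\hat{F}_2(y)=1$ and $\hat{F}_3(y)=1+y$. Your extra check that the neighbor $v$ of the deleted leaf is again an end vertex of $P_{n-1}$ is a point the paper passes over silently, and it is a sound detail to have verified.
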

\begin{proof}
Defining a sequence of polynomials
$\{\hat{F}_{n}(y)\}_{n=1,2,\ldots}$ via
\begin{align*}
\hat{F}_{2n}(y)&:=T_{\Cone(P_n)}(1,y), \\
\hat{F}_{2n+1}(y)&:=T_{\Coneplus{\ell}(P_n)}(1,y),
\end{align*}
we wish to show $\hat{F}_n(y)=F_n(y)$ for $n \geq 2$.
Since $T=P_n$ has $T-\ell=P_{n-1}$, one can
check that the recurrences \eqref{specialized-leaf-del-con} 
and \eqref{specialized-doubled-leaf-del-con} become
\begin{align*}
\hat{F}_n(y) &=
\hat{F}_{n-1}(y) + \hat{F}_{n-2}(y) \text{ for }n\text{ even,} \\
\hat{F}_n(y)&=\hat{F}_{n-1}(y) + y\hat{F}_{n-2}(y) \text{ for }n\text{ odd.}
\end{align*}
Hence $\{ \hat{F}_n(y)\}$
satisfies the recursion \eqref{Fibonacci-polynomial-definition} for $\{F_n(y)\}$.  After directly checking the bases cases
\begin{align*}
\hat{F}_2(y):=T_{\Cone(P_1)}(1,y)&=1=F_2(y),\\
\hat{F}_3(y):=T_{\Cone^{(+\ell)}(P_1)}(1,y)&=1+y=F_3(y),
\end{align*}
one concludes by induction on $n$ that $\hat{F}_n(y)=F_n(y)$ for $n \geq 2$.
\end{proof}

Here is a second corollary, proving another assertion from the Introduction.

\begin{corollary}
\label{star-double-star-corollary}
Consider the star $S_n$ for $n \geq 1$,
with $v$ the central vertex connected to all others.  Then
\begin{align}
\label{thagomizer-tree-count}
\tau(\Cone(S_n))&=2^{n-2}(n+1),\\
\label{doubled-thagomizer-tree-count}
\tau(\Coneplus{v}(S_n))&=2^{n-2}(n+3).
\end{align}
\end{corollary}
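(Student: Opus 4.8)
The plan is to prove both spanning tree counts by evaluating the relevant Tutte polynomials at $(x,y)=(1,1)$, using the recurrences \eqref{specialized-leaf-del-con} and \eqref{specialized-doubled-leaf-del-con} specialized to the star $T=S_n$. Recall that $\tau(G)=T_G(1,1)$, so it suffices to track the two quantities $a_n:=T_{\Cone(S_n)}(1,1)$ and $b_n:=T_{\Coneplus{v}(S_n)}(1,1)$, where $v$ is the central vertex of $S_n$. The key structural observation is that if one removes a \emph{leaf} $\ell$ of the star $S_n$ (one of the outer vertices), then $S_n-\ell=S_{n-1}$, and crucially the unique neighbor of $\ell$ in $S_n$ is the central vertex $v$. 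Thus the ``$v$'' appearing in the right-hand sides of \eqref{specialized-leaf-del-con} and \eqref{specialized-doubled-leaf-del-con} is exactly the central vertex of the smaller star $S_{n-1}$, so the $\Coneplus{v}(T-\ell)$ terms become $\Coneplus{v}(S_{n-1})$ and feed back into the same two quantities we are tracking.

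Concretely, I would first specialize \eqref{specialized-leaf-del-con} with $T=S_n$ and $\ell$ a leaf to obtain
\begin{align*}
a_n = T_{\Cone(S_{n-1})}(1,1) + T_{\Coneplus{v}(S_{n-1})}(1,1) = a_{n-1}+b_{n-1},
\end{align*}
and then specialize \eqref{specialized-doubled-leaf-del-con} (with the doubled edge at a leaf $\ell$ of $S_n$) to get $T_{\Coneplus{\ell}(S_n)}(1,1)=a_n+b_{n-1}$. The one subtlety is that \eqref{specialized-doubled-leaf-del-con} produces $T_{\Coneplus{\ell}(S_n)}$ with the doubled edge at a \emph{leaf}, whereas $b_n$ refers to the doubled edge at the \emph{central} vertex $v$; so I need a separate argument to express $b_n=T_{\Coneplus{v}(S_n)}(1,1)$ in terms of smaller data. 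For this I would delete and contract directly on one of the two parallel copies of $\{v_0,v\}$ in $\Coneplus{v}(S_n)$: deletion gives $\Cone(S_n)$, and contraction identifies $v_0$ with the central vertex and turns the doubled edge into a loop, yielding a multiple-of-$y$ term that at $y=1$ contributes $T$ of the resulting graph; I expect this to give a clean relation of the form $b_n=a_n+(\text{something countable})$.

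The main obstacle I anticipate is correctly identifying the graph obtained by contracting the doubled central edge of $\Coneplus{v}(S_n)$ and its Tutte evaluation at $(1,1)$. After contracting $\{v_0,v\}$, the merged vertex is adjacent (via $n-1$ pairs of edges, one from the original tree edge $\{v,\sigma_i\}$ and one from the cone edge $\{v_0,\sigma_i\}$) to each of the $n-1$ outer vertices, so the contraction is a multigraph that looks like a ``doubled star'' with a loop; its spanning-tree count should be computable either by a small direct argument or by a further deletion-contraction. I would verify the emerging recurrences against small base cases ($n=1$ and $n=2$, where $\Cone(S_1)$ and $\Cone(S_2)$ are easy to count by hand) to pin down initial conditions, and then solve the resulting linear recurrence in $(a_n,b_n)$ to confirm the closed forms $a_n=2^{n-2}(n+1)$ and $b_n=2^{n-2}(n+3)$. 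The solving step is routine once the recurrence and base cases are correct; the genuine care is all in the graph-identification for $b_n$.
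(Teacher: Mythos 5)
Your proposal is correct and follows essentially the same route as the paper: the recurrence $a_n = a_{n-1}+b_{n-1}$ from deletion-contraction at a leaf (whose neighbor is the center), together with a direct deletion-contraction on one copy of the doubled central edge, where deletion gives $\Cone(S_n)$ and contraction gives the doubled star (with a loop), whose $2^{n-1}$ spanning trees supply your ``something countable,'' yielding $b_n = a_n + 2^{n-1}$. The paper then verifies the closed forms by simultaneous induction rather than solving the recurrence, but this is the same argument.
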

\begin{proof}
Prove  \eqref{thagomizer-tree-count},\eqref{doubled-thagomizer-tree-count}
by simultaneous induction on $n$. Both are easily checked in the base case $n=1$.  

In the inductive step where $n \geq 2$, we first prove \eqref{thagomizer-tree-count}.  Bearing in mind that $\tau(G)=T_G(1,1)$, one can set $x=y=1$ in \eqref{leaf-del-con},
and apply it setting $T=S_n$ and setting $\ell$ to be any of its leaves to obtain
\begin{align*}
\tau(\Cone(S_n))
&=\tau(\Cone(S_{n-1}))+\tau(\Coneplus{v}(S_{n-1}))\\
&\overset{(*)}{=}2^{n-3}(n)+2^{n-3}(n+2)\\
&=2^{n-3}(2n+2)=2^{n-2}(n+1),
\end{align*}
where equality (*) used both assertions 
\eqref{thagomizer-tree-count},\eqref{doubled-thagomizer-tree-count} inductively.
In the inductive step for proving \eqref{doubled-thagomizer-tree-count},
we delete and contract on one copy of the doubled
edge $e=\{v_0,v\}$ in $\Coneplus{v}(S_{n})$, 
to obtain 
\begin{align*}
\tau(\Coneplus{v}(S_{n}) )
&=\tau( \Coneplus{v}(S_{n}) \setminus e )
+\tau( \Coneplus{v}(S_{n})/e )\\
&\overset{(**)}{=}
2^{n-2}(n+1)+2^{n-1}\\
&=2^{n-2}(n+3)
\end{align*}
where equality (**) used two observations:
\begin{itemize}
 \item $\Coneplus{v}(S_{n}) \setminus e \cong \Cone(S_n)$,
    so that one can apply \eqref{thagomizer-tree-count} inductively, and
 \item $\Coneplus{v}(S_{n})/e$ is isomorphic to the multigraph obtained from the star tree $S_{n-1}$ by duplicating each of its $n-1$ edges with a parallel edge.  Hence it has $2^{n-1}$
 spanning trees.$\qedhere$
\end{itemize}
\end{proof}

\section{Proof of Theorem~\ref{coconut-theorem}(i)}
\label{coconut-part1-section}

This section proves Theorem ~\ref{coconut-theorem}(i),
counting spanning trees for graphs which are cones over the coconut trees defined in the Introduction.  
It then also discusses an interesting property on the asymptotic growth of these spanning tree numbers.

Recall the statement of the theorem here, which used
for integers $p,s \geq 1$ the abbreviation
$$
m_{p,s}:=(s-2)F_{2p-1}+2F_{2p+1}=sF_{2p-1}+2F_{2p}.
$$

\vskip.1in
\noindent
{\bf Theorem~\ref{coconut-theorem}(i).}
{\it
For $p,s \geq 1$, the cone over the coconut tree $\CT{p}{s}$
has 
$
\tau(\Cone(\CT{p}{s}))= 2^{s-1} m_{p,s}.
$
}
\vskip.1in
\noindent
\begin{proof}
Use induction on $p$ to simultaneously prove the assertion of the theorem and a second formula:
\begin{align}
\label{CT-spanning-tree-number}
\tau (\Cone(\CT{p}{s})) &= 2^{s-1} \cdot (2{F}_{2p+1} + (s-2){F}_{2p-1}),\\
\label{DCT-spanning-tree-number}
\tau (\Coneplus{\pi_1}(\CT{p}{s})) &= 2^{s-1} \cdot (2{F}_{2p+2} + (s-2){F}_{2p}).
\end{align}
Recall $\pi_1$ is the leaf vertex of $\CT{p}{s}$ which is farthest from all of the star vertices $\sigma_1,\sigma_2,\ldots,\sigma_s$, except when $p=1$; in that case, $\CT{1}{s}\cong S_{s+1}$ and $\pi_1$ is generally not a leaf, but rather the central vertex of the star. 

\vskip .1in
\noindent
{\sf Base case:} $p=1$.\\
Here $\CT{1}{s} \cong S_{s+1}$, and one can check \eqref{CT-spanning-tree-number}, \eqref{DCT-spanning-tree-number} agree with \eqref{thagomizer-tree-count}, \eqref{doubled-thagomizer-tree-count}
from Corollary~\ref{star-double-star-corollary}.

\vskip .1in
\noindent
{\sf Inductive step:} $p \geq 2$.\\
We will apply the specialization to $x=y=1$
of \eqref{leaf-del-con},
\eqref{doubled-leaf-del-con},
which assert
\begin{align*}
\tau(\Cone(T))
&=\tau(\Cone(T -\ell))
 + \tau(\Coneplus{v}(T-\ell))\\
\tau(\Coneplus{\ell}(T))
&=\tau(\Cone(T))
 + \tau(\Coneplus{v}(T-\ell)).
\end{align*}

When applied to the leaf $\ell=\pi_1$ in the tree $T=\CT{p}{s}$, they give the following
\begin{align*}
\tau(\Cone(\CT{p}{s}))
&=\tau(\Cone(\CT{p-1}{s}))
 + \tau(\Coneplus{\pi_1}(\CT{p-1}{s}))\\
\tau(\Coneplus{\pi_1}(\CT{p}{s}))
&=\tau(\Cone(\CT{p}{s}))
 + \tau(\Coneplus{\pi_1}(\CT{p-1}{s})).
\end{align*}

Thus if one assumes \eqref{CT-spanning-tree-number},\eqref{DCT-spanning-tree-number} both hold for $p-1$, then one can deduce that \eqref{CT-spanning-tree-number} holds for $p$:
\begin{align*}
\tau(\Cone(\CT{p}{s})&=
\tau(\Cone(\CT{p-1}{s}))
 + \tau(\Coneplus{\pi_1}(\CT{p-1}{s}))\\
&=2^{s-1} \cdot (2{F}_{2p-1} + (s-2){F}_{2p-3})
+ 2^{s-1} \cdot (2{F}_{2p} + (s-2){F}_{2p-2})
\\
&=2^{s-1} \cdot (2({F}_{2p-1}+F_{2p}) + (s-2)({F}_{2p-3}+F_{2p-2}))\\
&=2^{s-1} \cdot (2{F}_{2p+1} + (s-2){F}_{2p-1}).
\end{align*}

And if one assumes \eqref{CT-spanning-tree-number} holds for $p$ and \eqref{DCT-spanning-tree-number} holds for $p-1$, then one deduces that \eqref{DCT-spanning-tree-number} also holds for $p$:
\begin{align*}
\tau(\Coneplus{\pi_1}(\CT{p}{s}))&=
\tau(\Cone(\CT{p}{s}))
 + \tau(\Coneplus{\pi_1}(\CT{p-1}{s}))\\
&= 2^{s-1} \cdot (2{F}_{2p+1} + (s-2){F}_{2p-1})
+2^{s-1} \cdot (2{F}_{2p} + (s-2){F}_{2p-2})\\
&=2^{s-1} \cdot (2({F}_{2p+1}+F_{2p}) + (s-2)({F}_{2p-1}+F_{2p-2}))\\
&=2^{s-1} \cdot (2{F}_{2p+2} + (s-2){F}_{2p}).
\qedhere
\end{align*}
\end{proof}

We pause here to discuss a consequence
of Theorem~\ref{coconut-theorem}(i)
on {\it asymptotic} growth of $\tau(\Cone(T))$.
Recall that the {\it Fibonacci numbers}
$\{F_n\}_{n=0,1,2,\ldots}$ have this
explicit formula
$$
F_n=\frac{\varphi^n - \psi^n}{\sqrt{5}}
\approx \frac{1}{\sqrt{5}} \cdot \varphi^n
$$
where $\varphi:=\frac{1+\sqrt{5}}{2}$
is the {\it golden ratio}, and $\psi:=\frac{1-\sqrt{5}}{2}$.
Therefore when one considers the lower and upper bounds given in 
\eqref{Urschel-result}
on $\tau(\Cone(T))$ for trees $T$ with $n$ vertices
$$
2^{n-2}(n+1)=\tau(S_n)
\,\, \leq \,\,
\tau(\Cone(T))
\,\, \leq \,\,
\tau(P_n) = F_{2n} \approx \frac{(\varphi^2)^n} {\sqrt{5}},
$$
one finds both have approximately geometric growth, whose geometric ratios are these limits of the $n^{th}$ roots:
\begin{align*}
   \lim_{n \rightarrow \infty} \sqrt[n]{\tau(S_n)} &= \lim_{n \rightarrow \infty} \sqrt[n]{2^{n-2}(n+1)}=2,\\
   \lim_{n \rightarrow \infty} \sqrt[n]{\tau(P_n)} &= \lim_{n \rightarrow \infty} \sqrt[n]{F_{2n}}=\varphi^2 \approx 2.618....
\end{align*}
Consequently, in any family of trees $T_n$ with $n$ vertices, for $n$ very large one has approximate inequalities
$$
2 \lessapprox \sqrt[n]{\tau(\Cone(T_n))} \lessapprox \varphi^2.
$$
One might ask whether for any real value $\beta$ in the range $[2,\varphi^2]$ there exists a sequence $T_1,T_2,\ldots$ of trees  
with $\lim_{n \rightarrow \infty} \sqrt[n]{\tau(T_n)}=\beta.$  This is indeed the case for a subfamily of coconut trees defined as follows. Given $\beta$ in $[2,\varphi^2]$,
so that $\frac{\beta}{2}$ lies in $[1,\frac{\varphi^2}{2}]$,
define a real number $\alpha$ in $[0,1]$ via
$
\alpha:=\log_{\frac{\varphi^2}{2}}
\left( \frac{\beta}{2}\right),
$
so that 
\begin{equation}
    \label{alpha-beta-relation}
\frac{\beta}{2}=\left(\frac{\varphi^2}{2}\right)^\alpha.
\end{equation}
We will consider the family of coconut trees $T_n:=\CT{p_n}{s_n}$ with $p_n+s_n=n$ where 
$p_n \approx \alpha n$, say $p_n:=\lfloor \alpha n \rfloor$ to be definite.  Then
$p_n+s_n=n$ requires $s_n:=\lceil (1-\alpha) n \rceil \approx (1-\alpha) n$.

\begin{proposition}
For any real number $\beta$ in $[2,\varphi^2]$, with the above definition of the family $\{ T_n \}$, one has 
   $$
   \lim_{n\rightarrow \infty} \sqrt[n]{\tau(\Cone(T_n))}
   =\beta.
   $$
\end{proposition}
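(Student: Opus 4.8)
The plan is to reduce the claim to a logarithmic computation built directly on the exact formula from Theorem~\ref{coconut-theorem}(i). Writing $\tau(\Cone(\CT{p}{s}))=2^{s-1}m_{p,s}$ with $m_{p,s}=sF_{2p-1}+2F_{2p}$, it suffices to prove that $\frac{1}{n}\log\tau(\Cone(T_n))\to\log\beta$, since $\sqrt[n]{\tau(\Cone(T_n))}=\exp\!\left(\frac{1}{n}\log\tau(\Cone(T_n))\right)$ and exponentiation is continuous. I would therefore take logarithms and treat the two factors $2^{s_n-1}$ and $m_{p_n,s_n}$ separately.

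The factor $2^{s_n-1}$ is immediate: since $s_n=\lceil(1-\alpha)n\rceil$, one has $\frac{1}{n}\log 2^{s_n-1}=\frac{s_n-1}{n}\log 2\to(1-\alpha)\log 2$. For the factor $m_{p_n,s_n}$, I would first sandwich it between pure Fibonacci terms. Using $F_{2p-1}\le F_{2p}$ together with $s_n\ge 0$,
\[
2F_{2p_n}\;\le\; m_{p_n,s_n}=s_nF_{2p_n-1}+2F_{2p_n}\;\le\;(s_n+2)F_{2p_n}.
\]
Next I would invoke the Binet estimate $F_m=\tfrac{1}{\sqrt5}\varphi^m+O(1)$ recalled in the text, giving $\log F_{2p_n}=2p_n\log\varphi+O(1)$. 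Because the prefactors $2$ and $s_n+2$ are at most linear in $n$, their logarithms are $O(\log n)$, hence negligible after dividing by $n$. Thus
\[
\frac{1}{n}\log m_{p_n,s_n}=\frac{2p_n}{n}\log\varphi+O\!\left(\frac{\log n}{n}\right)\longrightarrow 2\alpha\log\varphi,
\]
using $p_n=\lfloor\alpha n\rfloor$, so that $p_n/n\to\alpha$.

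Combining the two pieces gives
\[
\lim_{n\to\infty}\frac{1}{n}\log\tau(\Cone(T_n))=(1-\alpha)\log 2+2\alpha\log\varphi=\log 2+\alpha\log\frac{\varphi^2}{2}.
\]
Finally I would invoke the defining relation \eqref{alpha-beta-relation}, $\left(\frac{\varphi^2}{2}\right)^\alpha=\frac{\beta}{2}$, equivalently $\alpha\log\frac{\varphi^2}{2}=\log\frac{\beta}{2}$, so the limit equals $\log 2+\log\frac{\beta}{2}=\log\beta$; exponentiating finishes the proof.

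The computation itself is routine, and the only genuinely delicate point is confirming that the sub-exponential contributions — the polynomial-in-$n$ factor $s_n$ hidden inside $m_{p_n,s_n}$, and the $O(1)$ error from Binet — truly wash out under the $n$-th root, which is exactly the $O(\log n)/n\to 0$ estimate above. A minor caveat concerns the endpoints: for $\alpha\in(0,1)$ both $p_n$ and $s_n$ are eventually $\ge 1$, so $\CT{p_n}{s_n}$ is a bona fide coconut tree for all large $n$; at the endpoints $\beta=2$ ($\alpha=0$) and $\beta=\varphi^2$ ($\alpha=1$) one can instead take $T_n=S_n$ and $T_n=P_n$, whose $n$-th-root limits $2$ and $\varphi^2$ were already computed in the discussion preceding the proposition.
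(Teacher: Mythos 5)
Your proof is correct and takes essentially the same approach as the paper's: both rest on the exact formula of Theorem~\ref{coconut-theorem}(i), the Binet estimate $F_m \approx \varphi^m/\sqrt{5}$, the observation that the linear-in-$n$ prefactor washes out under the $n$-th root, and the defining relation \eqref{alpha-beta-relation}. Your logarithmic/sandwich formulation merely makes rigorous the ``$\approx$'' steps the paper leaves informal, and your endpoint caveat for $\alpha \in \{0,1\}$ addresses a detail the paper glosses over.
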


\begin{proof}
Theorem~\ref{coconut-theorem}(i) shows that for $p, s$ both large,
\begin{align*}
\tau(\Cone(\CT{p}{s}))
 &=2^{s-1} (sF_{2p-1} +2F_{2p})\\
 &\approx 2^{s-1} \frac{1}{\sqrt{5}} \left( s \cdot \varphi^{2p-1} +2\varphi^{2p}\right)\\
 &=2^s \cdot (\varphi^2)^p \cdot \frac{s+2\varphi}{2 \sqrt{5} \varphi}\\
\end{align*}
Consequently, when $n$ is large and one chooses $p_n \approx \alpha n$ and $s_n \approx (1-\alpha)n$ as above, one has
\begin{align*}
 \sqrt[n]{\tau(\Cone(\CT{p_n}{s_n})}
 & \approx 
 \sqrt[n]{
  2^{(1-\alpha)n}
 } \cdot
 \sqrt[n]{
 (\varphi^2)^{\alpha n}
 } \cdot
 \sqrt[n]{
 \frac{(1-\alpha)n +2\varphi}{2 \sqrt{5} \varphi} 
 } \\
& = 2^{1-\alpha} \cdot \left( \varphi^2 \right)^\alpha \cdot \sqrt[n]{
 \frac{(1-\alpha) n +2\varphi}{2 \sqrt{5} \varphi} 
 } 
\end{align*}
In the limit as $n \rightarrow \infty$, the
last factor approaches $1$, while \eqref{alpha-beta-relation} shows that the first two factors give $\beta$:
$$
2^{1-\alpha} \cdot \left( \varphi^2 \right)^\alpha = 
2 \left( \frac{\varphi^2}{2} \right)^\alpha
= \beta. \qedhere
$$
\end{proof}

\section{Proof of Theorem~\ref{coconut-theorem}(ii)}
\label{coconut-part2-section}

This section proves Theorem ~\ref{coconut-theorem}(ii),
on the structure of the sandpile group for
graphs which are cones over coconut trees.
We recall here the statement, again involving
$
m_{p,s}:=(s-2)F_{2p-1}+2F_{2p+1}=sF_{2p-1}+2F_{2p}.
$
\vskip.1in
\noindent
{\bf Theorem~\ref{coconut-theorem}(ii).}
{\it 
For $p,s \geq 1$, the cone over the coconut tree $\CT{p}{s}$
has 
$$
K(\Cone(\CT{p}{s})) \cong
\begin{cases}
    \Z_2^{s-1} \oplus \Z_{m_{p,s}} 
    & \text{ if }s=1\text{ or }p \equiv 2 \bmod{3},\\
    \Z_2^{s-2} \oplus \Z_{2m_{p,s}} 
    & \text{ if }s \geq 2\text{ and }p \equiv 0,1 \bmod{3}.
\end{cases}
$$
}
\vskip.1in
\noindent

\begin{proof}
Without loss of generality, we assume $p\geq 2$:  for $p=1$, one has\footnote{The astute reader may note that this still omits the case $p=s=1$, where $\CT{p}{s}$ is a single edge; 
this case is easy.}
$\CT{1}{s} \cong \CT{2}{s-1}$,
and one can check that the group structures of $K(\Cone(\CT{1}{s}))$ and
$K(\Cone(\CT{2}{s-1}))$
predicted in the theorem are the same.

Since $p \geq 2$, the vertex $\pi_1$ in $\CT{p}{s}$ which is farthest from the star
vertices $\sigma_1,\ldots,\sigma_s$,
is a leaf, and can play the role of $v_1$ in applying
Theorem~\ref{generators}.
Hence the images $\bar{e}_{\sigma_1},\ldots,\bar{e}_{\sigma_s}$ 
generate $K(\Cone(\CT{p}{s})$.
We start by looking for relations among them,
first by looking for relations that also involve the
extra two elements 
$
\bar{e}_{\pi_{p-1}},
\bar{e}_{\pi_p}
$
and later eliminating them.
This lemma gives a sequence of relations relating $\bar{e}_{\pi_i}$ to the next $\bar{e}_{\pi_{i+1}}$.
\begin{lemma}
\label{CT-trunk-lemma}
For $i=1,2,\ldots,p-1$, one has in  $K(\Cone(\CT{p}{s}))$ that
$
F_{2i+1}\bar{e}_{\pi_i}
= 
F_{2i-1} \bar{e}_{\pi_{i+1}}. 
$
\end{lemma}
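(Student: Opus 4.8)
The plan is to read the defining relations of $K(\Cone(\CT{p}{s}))$ directly off the columns of the reduced Laplacian $\overline{L}_{\Cone(\CT{p}{s})}$ rooted at the cone vertex $v_0$, and then to prove the stated identity by induction on $i$. Recall from \eqref{sandpile-group-presentation} that every column of $\overline{L}$ vanishes in the cokernel, so for each non-root vertex $v$ one obtains the relation $\deg(v)\,\bar e_v = \sum_{v'} \bar e_{v'}$, where the sum runs over the tree-neighbors $v' \neq v_0$ of $v$. Since we have assumed $p \ge 2$, the path end $\pi_1$ has degree $2$ in $\Cone(\CT{p}{s})$ (its only neighbors are $\pi_2$ and $v_0$), while each interior path vertex $\pi_j$ with $2 \le j \le p-1$ has degree $3$ (neighbors $\pi_{j-1},\pi_{j+1},v_0$). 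These yield the two families of relations I will use:
\[
2\,\bar e_{\pi_1} = \bar e_{\pi_2},
\qquad
3\,\bar e_{\pi_j} = \bar e_{\pi_{j-1}} + \bar e_{\pi_{j+1}} \quad (2 \le j \le p-1).
\]

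First I would dispatch the base case $i=1$: the relation $2\,\bar e_{\pi_1} = \bar e_{\pi_2}$ is exactly $F_3\,\bar e_{\pi_1} = F_1\,\bar e_{\pi_2}$, since $F_3 = 2$ and $F_1 = 1$. For the inductive step, I assume $F_{2i+1}\,\bar e_{\pi_i} = F_{2i-1}\,\bar e_{\pi_{i+1}}$ for some $i$ with $1 \le i \le p-2$ (so that $\pi_{i+1}$ is interior and the degree-$3$ relation is available). Multiplying the interior relation $3\,\bar e_{\pi_{i+1}} = \bar e_{\pi_i} + \bar e_{\pi_{i+2}}$ by $F_{2i+1}$ and substituting the induction hypothesis $F_{2i+1}\,\bar e_{\pi_i} = F_{2i-1}\,\bar e_{\pi_{i+1}}$ gives
\[
(3F_{2i+1} - F_{2i-1})\,\bar e_{\pi_{i+1}} = F_{2i+1}\,\bar e_{\pi_{i+2}}.
\]

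To conclude it remains only to identify the coefficient on the left with $F_{2i+3}$. This is the single computational point, and it follows from the Fibonacci recurrence: $F_{2i+3} = 2F_{2i+1} + F_{2i} = 3F_{2i+1} - F_{2i-1}$, using $F_{2i} = F_{2i+1} - F_{2i-1}$. Substituting then yields $F_{2i+3}\,\bar e_{\pi_{i+1}} = F_{2i+1}\,\bar e_{\pi_{i+2}}$, which is the claim for index $i+1$, completing the induction up through $i = p-1$. I do not anticipate a serious obstacle: the proof is a clean induction whose only moving parts are reading the correct local relations off the degrees of $\pi_1$ and the interior path vertices, tracking the range of $i$ so that the degree-$3$ relation is always invoked at an interior vertex $\pi_{i+1}$ (which holds precisely because $i+1 \le p-1$), and verifying the elementary Fibonacci identity above.
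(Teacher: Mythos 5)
Your proof is correct and is essentially identical to the paper's: the same induction on $i$, the same base case read off the $\pi_1$ column of $\overline{L}_{\Cone(\CT{p}{s})}$, the same inductive step combining the vanishing of the $\pi_{i+1}$ column (scaled by $F_{2i+1}$) with the induction hypothesis, and the same Fibonacci identity $3F_{2i+1}-F_{2i-1}=F_{2i+3}$. Your bookkeeping of the index range (so that $\pi_{i+1}$ is interior precisely when $i+1\le p-1$) is a welcome explicit touch, but the argument is the paper's argument.
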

\begin{proof}[Proof of Lemma.]
Induct on $i$.  The base case $i=1$ asserts 
\begin{align*}
2 \bar{e}_{\pi_1}&=  \bar{e}_{\pi_2}
\end{align*}
which is the assertion that the $\pi_1^{th}$
column of $\overline{L}_{\Cone(\CT{p}{s})}$ is zero in  $K(\Cone(\CT{p}{s}))$.
In the inductive step, start with the assertion of the lemma multiplied by $-1$,
$$
-F_{2i+1}\bar{e}_{\pi_i}
= 
-F_{2i-1} \bar{e}_{\pi_{i+1}}, 
$$
and add to it the equation that asserts vanishing of the $\pi_{i+1}^{th}$ column multiplied by $F_{2i+1}$, rewritten as
$$
F_{2i+1} \bar{e}_{\pi_{i}} + F_{2i+1} \bar{e}_{\pi_{i+2}}= 3 F_{2i+1} \bar{e}_{\pi_{i+1}}.
$$
giving this equation:
\begin{align*}
F_{2i+1}\bar{e}_{\pi_{i+2}}
&= 
3 F_{2i+1} \bar{e}_{\pi_{i+1}}-F_{2i-1} \bar{e}_{\pi_{i+1}}\\
&= 
(3 F_{2i+1} -F_{2i-1}) \bar{e}_{\pi_{i+1}}\\
&= 
F_{2i+3} \bar{e}_{\pi_{i+1}}
\end{align*}
where the last equality used this easily checked Fibonacci identity 
\begin{equation}
\label{Fibonacci-identity}
3 F_{2i+1} -F_{2i-1}=F_{2i+3}.
\end{equation} 
This proves the lemma.
\end{proof}

We will now combine three types of relations: 
 the last case $i=p-1$ of Lemma~\ref{CT-trunk-lemma}, 
 isolated here as
\begin{equation}
\label{last-case-of-lemma}
F_{2p-3} \bar{e}_{\pi_{p}} 
=
F_{2p-1}\bar{e}_{\pi_{p-1}},
\end{equation}
together with the vanishing of the $\pi_p^{th}$ column of $\overline{L}_{\Cone(\CT{p}{s})}$
\begin{equation}
\label{star-center-Laplacian-column}
(s+2) \bar{e}_{\pi_p} = \bar{e}_{\pi_{p-1}} + \sum_{i=1}^s \bar{e}_{\sigma_i},
\end{equation}
and for each $i=1,2,\ldots,s$, vanishing of the $\sigma_i^{th}$ column of $\overline{L}_{\Cone(\CT{p}{s})}$
\begin{equation}
\label{double-leaves-equal-star-center}
2\bar{e}_{\sigma_i} = \bar{e}_{\pi_p} \quad \text{ for }1 \leq i \leq s.
\end{equation}

It is helpful to express the coefficients of these relations \eqref{last-case-of-lemma}, \eqref{star-center-Laplacian-column}, \eqref{double-leaves-equal-star-center} among 
$
\bar{e}_{\pi_{p-1}},
\bar{e}_{\pi_p},
\bar{e}_{\sigma_1},
\bar{e}_{\sigma_2},
\ldots
\bar{e}_{\sigma_s},
$
in matrix form.  Consider the following $(s+2) \times (s+2)$ matrix, having rows indexed by
$
\bar{e}_{\pi_{p-1}},
\bar{e}_{\pi_p},
\bar{e}_{\sigma_1},
\bar{e}_{\sigma_2},
\ldots
\bar{e}_{\sigma_s},
$
and each of whose column vectors expresses the coefficients of
one of the relations \eqref{last-case-of-lemma}, \eqref{star-center-Laplacian-column}, \eqref{double-leaves-equal-star-center}:
$$
\bordermatrix{~ & \eqref{last-case-of-lemma} & \eqref{star-center-Laplacian-column} & \eqref{double-leaves-equal-star-center} &  \eqref{double-leaves-equal-star-center}&\cdots & \eqref{double-leaves-equal-star-center} \cr
\bar{e}_{\pi_{p-1}}& F_{2p-1} & -1& 0 & 0 & \cdots &0 \cr
\bar{e}_{\pi_{p}}  & -F_{2p-3} & s+2 &-1&-1& \cdots &-1\cr
\bar{e}_{\sigma_1} & 0         & -1 &2& 0& \cdots & 0\cr
\bar{e}_{\sigma_2} & 0         & -1 &0& 2& \cdots & 0\cr
\vdots             & \vdots    & \vdots &\vdots&  &\ddots  & \vdots\cr
\bar{e}_{\sigma_s} & 0          & -1&0& 0& \cdots & 2\cr
}
$$
Since each column vector gives a relation
valid within $K(\Cone(\CT{p}{s}))$,
by performing column operations, one can create more such valid relations.  We now use these operations to try and create a relation among only $\bar{e}_{\sigma_1},
\bar{e}_{\sigma_2},
\ldots
\bar{e}_{\sigma_s},
$
not involving the first two elements $
\bar{e}_{\pi_{p-1}},
\bar{e}_{\pi_p}$, as follows.  
Adding $F_{2p-1}$ times column $2$ to column $1$ yields this matrix:
$$
\bordermatrix{~ & ~ & ~ & ~ & ~ &~ & ~ \cr
\bar{e}_{\pi_{p-1}}& 0 & -1& 0 & 0 & \cdots &0 \cr
\bar{e}_{\pi_{p}}  & (s+2)F_{2p-1}-F_{2p-3} & s+2 &-1&-1& \cdots &-1\cr
\bar{e}_{\sigma_1} & -F_{2p-1}         & -1 &2& 0& \cdots & 0\cr
\bar{e}_{\sigma_2} & -F_{2p-1}         & -1 &0& 2& \cdots & 0\cr
\vdots             & \vdots    & \vdots &\vdots&  &\ddots  & \vdots\cr
\bar{e}_{\sigma_s} & -F_{2p-1}          & -1&0& 0& \cdots & 2\cr
}
$$
Adding $\left( (s+2)F_{2p-1}-F_{2p-3} \right)$ times column $3$ to column $1$, and also subtracting column $3$ from columns $4,5,6,\ldots,s+2$, yields the following:
$$
\bordermatrix{~ & ~ & ~ & ~ &  ~& ~ & ~ \cr
\bar{e}_{\pi_{p-1}}& \redden{0} & -1& 0 & \redden{0} & \cdots & \redden{0} \cr
\bar{e}_{\pi_{p}}  & \redden{0} & s+2 &-1& \redden{0} & \cdots & \redden{0} \cr
\bar{e}_{\sigma_1} & (2s+3)F_{2p-1}-2F_{2p-3}        & -1 &2& -2& \cdots & -2\cr
\bar{e}_{\sigma_2} & -F_{2p-1}         & -1 &0& 2& \cdots & 0\cr
\vdots             & \vdots    & \vdots &\vdots&  &\ddots  & \vdots\cr
\bar{e}_{\sigma_s} & -F_{2p-1}          & -1&0& 0& \cdots & 2\cr
}
$$
Note here the underlined zeroes, in the first two rows and in the $s$ columns indexed $1,4,5,6,\ldots,s+2$. Hence by restricting to these $s$ columns and to the last $s$ rows corresponding to $\sigma_1, \ldots, \sigma_s$, one obtains the following matrix $M$ in $\Z^{s \times s}$, whose columns represent relations
in $K(\Cone(\CT{p}{s}))$
that hold only among $
\bar{e}_{\sigma_1},
\bar{e}_{\sigma_2},
\ldots
\bar{e}_{\sigma_s}
$:

$$
M=\bordermatrix{~ & ~ &  ~& ~ &~ &  ~ \cr
\bar{e}_{\sigma_1} & (2s+3)F_{2p-1}-2F_{2p-3}        & -2& -2& \cdots & -2\cr
\bar{e}_{\sigma_2} & -F_{2p-1} & 2& 0& \cdots & 0\cr
\bar{e}_{\sigma_3} & -F_{2p-1} & 0& 2& \cdots & 0\cr
\vdots  &\vdots&  &\ddots  & \vdots\cr
\bar{e}_{\sigma_s} & -F_{2p-1} & 0&  0 &\cdots & 2\cr
}
$$
It is also useful to consider this integrally equivalent lower triangular matrix $M'$ in $\Z^{s \times s}$, obtained from $M$ by adding each of rows $2,3,\ldots,s$ to the first row:
$$
M':=\left(
\begin{matrix}
    m_{p,s} & 0 & 0& \cdots & 0 \\
    -F_{2p-1} &2 & 0 & \cdots & 0\\
    -F_{2p-1} &0 & 2 & \cdots & 0\\
    \vdots & \vdots& \vdots& \ddots& \vdots\\
    -F_{2p-1} &0 & 0 & \cdots & 2
\end{matrix}
\right).
$$
Note that since the size of the cokernel of an integral matrix is (up to a sign) its determinant, and since $M, M'$
are integrally equivalent, one has these numerical equalities
$$
|\Z^s/\im(M)|=|\det(M)|= |\det(M')|=2^{s-1} m_{p,s} = \tau(\Cone(\CT{p}{s}))=|K(\Cone(\CT{p}{s}))|.
$$
The equality of the far left and far right sides let us conclude that the surjective map
\begin{equation}
\label{s-sized-surjection}
\Z^s \twoheadrightarrow K(\Cone(\CT{p}{s}))
\end{equation}
sending the $i^{th}$ standard basis vector of $\Z^s$ to $\bar{e}_{\sigma_i}$ for $i=1,2,\ldots,s$,
will descend to an isomorphism 
$$
\Z^s/\im(M) \cong K(\Cone(\CT{p}{s})),
$$
because it is surjective, and both sides
have the same cardinality.  Thus $K(\Cone(\CT{p}{s}))$ is the same as the integer cokernel of $M$ or $M'$.  Our last step is to analyze the cokernel of $M'$, in the two cases of the theorem.

\vskip.1in
\noindent
{\sf Case 1.} $s =1 $, or $p \equiv 2 \bmod{3}$ so that $F_{2p-1}$ is even.

When $s=1$, the matrix $M'=[ m_{p,s} ]$ is $1 \times 1$, and the assertion is easily checked.  
When $F_{2p-1}$ is even, one can add $\frac{F_{2p-1}}{2}$ times each of columns $2,3,\ldots,s-1$ of $M'$ to the first column,  giving this integrally equivalent diagonal matrix, with cokernel $\Z_2^{s-1} \oplus \Z_{m_{p,s}}$, as desired:
$$
\left(
\begin{matrix}
    m_{p,s} & 0 & 0& \cdots & 0 \\
    0 &2 & 0 & \cdots & 0\\
    0 &0 & 2 & \cdots & 0\\
    \vdots & \vdots& \vdots& \ddots& \vdots\\
    0 &0 & 0 & \cdots & 2
\end{matrix}
\right).
$$

\vskip.1in
\noindent
{\sf Case 2.} $s\geq 2$ and $p \equiv 0,1 \bmod{3}$ so that $F_{2p-1}$ is odd.

Here adding $\frac{F_{2p-1}-1}{2}$
times each column $2,3,\ldots,s$ of $M'$ to the first column  results in this $s \times s$ matrix:
$$
\left(
\begin{matrix}
    m_{p,s} & 0 & 0& \cdots & 0 \\
    -1 &2 & 0 & \cdots & 0\\
    -1 &0 & 2 & \cdots & 0\\
    \vdots & \vdots& \vdots& \ddots& \vdots\\
    -1 &0 & 0 & \cdots & 2
\end{matrix}
\right).
$$
One can then use the $(2,1)$ entry $-1$   as a pivot to eliminate all of the entries in the first column.  The result has the same cokernel as its $(s-1)\times (s-1)$ minor deleting row $2$ and column $1$:
$$
\left(
\begin{matrix}
    2m_{p,s} & 0 & 0& \cdots & 0 \\
    -2 &2 & 0 & \cdots & 0\\
    -2 &0 & 2 & \cdots & 0\\
    \vdots & \vdots& \vdots& \ddots& \vdots\\
    -2 &0 & 0 & \cdots & 2
\end{matrix}
\right).
$$
Similar to Case 1, this last matrix has cokernel
$\Z_2^{s-2} \oplus \Z_{2m_{p,s}}$, as desired.
\end{proof}

\begin{remark} \rm
\label{coconut-tree-mu-remark}
As a test case for
Question~\ref{mu-bound-question},
we compare
the exact values of $\mu(\Cone(T))$ for coconut
trees $T=\CT{p}{s}$
implied by Theorem~\ref{coconut-theorem}(ii)
with the upper bound of $\ell(T)-1$ from Theorem~\ref{generators}.

In doing this comparison, one can assume without loss
of generality that $p \geq 2$, due to the
isomorphism $\CT{1}{s} \cong \CT{2}{s-1}$
pointed out in the proof of Theorem~\ref{coconut-theorem}(ii).
For $p \geq 2$, one always has $\ell(\CT{p}{s})=s+1$.
When computing $\mu(\Cone(\CT{p}{s}))$,
one must remember that
$
\Z_2 \oplus \Z_{m_{p,s}} \cong \Z_{2m_{p,s}}
$
whenever $m_{p,s}$ is odd; the latter holds
if and only if both $s$ is odd and $F_{2p-1}$ is odd, or equivalently, if both $s$ is odd and $p \equiv 0,1 \bmod{3}$.  Thus
Theorem~\ref{coconut-theorem}(ii) implies for
coconut trees $\CT{p}{s}$ with 
$p \geq 2$ that
$$
\mu(K(\Cone(\CT{p}{s})))=
\begin{cases}
\ell(\CT{p}{s})-1=s & \text{ if }s=1
 \text{ or if }p \equiv 2 \bmod{3} \\
 & \text{ or if }s\geq 2\text{ is even and }p \equiv 0,1 \bmod{3},\\
 & \\
\ell(\CT{p}{s})-2=s-1& \text{ if }s \geq 3\text{ is odd and }p \equiv 0,1 \bmod{3}.
\end{cases}
$$
The distinction between the two cases here seems already a bit subtle.
\end{remark}

\section{Proof of Theorem~\ref{tutte-bounds-theorem}}
\label{tutte-bounds-section}

This section proves Theorem ~\ref{tutte-bounds-theorem},
giving coefficientwise lower and upper bounds for the Tutte polynomial evaluations $T_G(1,y)$ that enumerate recurrent chip configurations by weight, when $G$ is the cone over
a tree having $n$ vertices. 

Recall the statement,
with the two relevant coefficientwise inequalities to be proven marked (A),(B).
\vskip.1in
\noindent
{\bf Theorem~\ref{tutte-bounds-theorem}.}
{\it 
    For a tree $T$ on $n \geq 2$ vertices, 
    the weight enumerator of recurrent chip configurations
    $T_{\Cone(T)}(1,y)$ from \eqref{Merino-theorm} satisfies
the following coefficientwise inequalities as
polynomials in $\Z[y]$:
\begin{equation}
   T_{\Cone(S_n)}(1,y) \,\,  
   \overset{(A)}{\leq} 
   \,\, T_{\Cone(T)}(1,y) \,\, 
   \overset{(B)}{\leq} \,\, T_{\Cone(P_n)}(1,y).
\end{equation}
In particular, setting $y=1$, this implies
$$
\begin{array}{cccccl}
\left( 2^{n-2}(n+1) =\right) \,\, \tau(\Cone(S_n)) &\leq& \tau (\Cone(T))&\leq &\tau(\Cone(P_n)) \,\, \left(= F_{2n}\right)\\
\end{array}
$$
}
\vskip.1in
\noindent

The next two subsections deal with inequality (A) and (B) in turn.

\subsection{Proof of inequality (A)}
This inequality will follow from a stronger
assertion, Lemma~\ref{star-inequality-lemma}
below, that also proves the following assertion from the Introduction:  one has
$
T_{\Cone(S_n)}(1,y)=S_n(y),
$
where $S_n(y)$ is a polynomial with $S_n(1)=2^{n-2}(n+1)=\tau(\Cone(S_n))$ defined by this
recursion:
$$
S_1(y):=1 \quad \text{ and }\quad 
S_n(y):=(y+1) S_{n-1}(y) + 2^{n-2} \text{ for }n \geq 2.
$$

\begin{lemma}
\label{star-inequality-lemma}
For any leaf vertex $\ell$ in any tree $T$ with $n$ vertices, one has a coefficientwise inequality in $\Z[y]$
$$
T_{\Cone(T)}(1,y) \,\,
\geq \,\,
(y+1) \cdot T_{\Cone(T-\ell)}(1,y) + 2^{n-2},
$$
with equality if $T=S_n.$
\end{lemma}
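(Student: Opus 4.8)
The plan is to reduce the asserted inequality to a coefficientwise inequality for a single cone graph, split off its constant term by a clean argument, and then attack the remaining coefficients with an injection of spanning trees that raises external activity by one. Assume $n \geq 3$, the case $n=2$ (where $T=S_2=P_2$ and the inequality is an equality) being immediate. Write $v$ for the neighbor of $\ell$ and set $T' := T-\ell$, a tree on $m := n-1$ vertices. The recurrence \eqref{specialized-leaf-del-con} reads $T_{\Cone(T)}(1,y) = T_{\Cone(T')}(1,y) + T_{\Coneplus{v}(T')}(1,y)$, so subtracting $T_{\Cone(T')}(1,y)$ shows the lemma is equivalent to $T_{\Coneplus{v}(T')}(1,y) \geq y\,T_{\Cone(T')}(1,y) + 2^{m-1}$.

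Next I would strip off the doubled edge. Deleting and contracting one copy of $\{v_0,v\}$ in $\Coneplus{v}(T')$ gives, exactly as in the proof of \eqref{doubled-leaf-del-con} but for an arbitrary vertex $v$ (the contraction now creates a loop at the merged vertex), the identity $T_{\Coneplus{v}(T')}(1,y) = T_{\Cone(T')}(1,y) + y\,T_{\Cone(T')/\{v_0,v\}}(1,y)$. Since $m \geq 2$ the edge $\{v_0,v\}$ is neither a loop nor a coloop of $\Cone(T')$, so ordinary deletion--contraction also gives $T_{\Cone(T')/\{v_0,v\}}(1,y) = T_{\Cone(T')}(1,y) - T_{\Cone(T')\setminus\{v_0,v\}}(1,y)$. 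Substituting this into the previous identity and simplifying turns the target into
\[
T_{\Cone(T')}(1,y) \;\geq\; y\,T_{\Cone(T')\setminus\{v_0,v\}}(1,y) + 2^{m-1}. \qquad (\ast)
\]
A quick check confirms $(\ast)$ is an equality when $T'=S_m$ is a star and $v$ its center, since then $\Cone(T')\setminus\{v_0,v\}$ is the complete bipartite graph $K_{2,m-1}$; this recovers the equality case $T=S_n$.

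To prove $(\ast)$ I would first match constant terms. The doubled-edge identity above shows $[y^0]T_{\Coneplus{v}(H)}(1,y) = [y^0]T_{\Cone(H)}(1,y)$ for every vertex $v$, since $y\,T_{\Cone(H)/\{v_0,v\}}$ contributes nothing in degree $0$. Feeding this into \eqref{specialized-leaf-del-con} gives $[y^0]T_{\Cone(H)}(1,y) = 2\,[y^0]T_{\Cone(H-\ell')}(1,y)$ for any leaf $\ell'$, whence $[y^0]T_{\Cone(H)}(1,y) = 2^{|V(H)|-1}$ by induction (base case $|V(H)|=1$ giving $1$). In particular the constant term of the left side of $(\ast)$ is $2^{m-1}$, matching the additive constant on the right, so $(\ast)$ reduces to the coefficientwise inequality $[y^j]T_{\Cone(T')}(1,y) \geq [y^{j-1}]T_{\Cone(T')\setminus\{v_0,v\}}(1,y)$ for all $j \geq 1$.

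For these remaining coefficients I would use the activity form \eqref{ext-act-formula}, $T_G(1,y)=\sum_S y^{\extact_\prec(S)}$, with an ordering $\prec$ in which $e := \{v_0,v\}$ is the largest edge. Then a spanning tree of $\Cone(T')$ avoiding $e$ has the same external activity as in $\Cone(T')\setminus e$ (the maximal edge $e$ is never externally active), so the inequality to be shown is that the number of spanning trees of $\Cone(T')$ of external activity $j$ is at least the number of those avoiding $e$ of external activity $j-1$. The natural approach is to build an injection $S \mapsto (S+e)-g$ from spanning trees avoiding $e$ into all spanning trees, with $g$ on the fundamental cycle of $e$ in $S+e$, chosen so that external activity rises by exactly one. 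I expect this injection to be the crux and the main obstacle: already for $\Cone(P_3)$ with $e=\{v_0,\pi_2\}$ one finds that deleting the $\prec$-minimal (or the $\prec$-maximal) edge of the fundamental cycle fails to change the activity by one, so the choice of $g$ must depend on $S$ more globally than through its fundamental cycle alone. This is precisely where Urschel's argument for the numerical bound $\tau(\Cone(S_n)) \leq \tau(\Cone(T))$ must be upgraded to track activities; as a warm-up I would first establish the $y=1$ specialization of $(\ast)$, namely $\tau(\Cone(T')/\{v_0,v\}) \geq 2^{m-1}$, and then refine that count into the activity-preserving injection.
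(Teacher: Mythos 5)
Your chain of reductions is sound: \eqref{specialized-leaf-del-con} together with deletion--contraction on the doubled edge correctly converts the lemma into your statement $(\ast)$, namely $T_{\Cone(T')}(1,y) \geq y\,T_{\Cone(T')\setminus\{v_0,v\}}(1,y) + 2^{m-1}$ for $T'=T-\ell$ on $m=n-1$ vertices, and your inductive computation of the constant term, $[y^0]\,T_{\Cone(H)}(1,y)=2^{|V(H)|-1}$, is correct and is a genuinely useful ingredient that the paper's own proof does not have. But the argument then stops at its decisive step: the inequality $[y^j]\,T_{\Cone(T')}(1,y) \geq [y^{j-1}]\,T_{\Cone(T')\setminus\{v_0,v\}}(1,y)$ for $j\geq 1$ is never proved. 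The activity-raising injection $S\mapsto (S+e)-g$ that you pose as the crux is left unconstructed, and you yourself record that the natural choices of $g$ fail already on $\Cone(P_3)$. As submitted, this is a program with a hole exactly where all of the combinatorial content of the lemma lives, so it is not a proof.

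The gap can in fact be closed inside your own framework by reversing your choice of ordering: make $e=\{v_0,v\}$ the $\prec$-\emph{smallest} edge of $\Cone(T')$ rather than the largest. Then $e$ is externally active for \emph{every} spanning tree $S$ avoiding $e$ (it is $\prec$-smaller than each edge $e'$ for which $(S-e')\cup e$ is a spanning tree), while the activity of every other external edge of such an $S$ is the same whether computed in $\Cone(T')$ or in $\Cone(T')\setminus e$; hence the trees avoiding $e$ contribute exactly $y\,T_{\Cone(T')\setminus e}(1,y)$ to $T_{\Cone(T')}(1,y)$, so that
$$
T_{\Cone(T')}(1,y) - y\,T_{\Cone(T')\setminus e}(1,y) \;=\; \sum_{S \ni e} y^{\extact_\prec(S)},
$$
a polynomial with nonnegative coefficients whose constant term equals $[y^0]\,T_{\Cone(T')}(1,y)=2^{m-1}$ by your constant-term formula. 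No injection is needed beyond the inclusion of the trees containing $e$ into all trees, and equality for stars follows because, when $v$ is the center of $T'=S_m$, exactly $2^{m-1}$ spanning trees contain $e$ and each has external activity $0$. For comparison, the paper never isolates $(\ast)$: it orders the edges of $\Cone(T)$ itself ($\{v_0,\ell\}$ globally smallest, $\{\ell,u_0\}$ globally largest, cone edges before tree edges), splits the spanning trees of $\Cone(T)$ into three classes according to which of the two edges at $\ell$ they use, identifies the first two classes with $(1+y)\,T_{\Cone(T-\ell)}(1,y)$, and then, lacking your constant-term formula, explicitly constructs $2^{n-2}$ activity-zero trees in the third class out of forests in $T-\ell$.
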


Lemma~\ref{star-inequality-lemma} plays two roles.
On one hand, the equality case in Lemma~\ref{star-inequality-lemma} immediately proves the assertion $T_{\Cone(S_n)}(1,y)=S_n(y)$
by induction on $n$.  On the other hand, 
the inequality in Lemma~\ref{star-inequality-lemma} also 
proves inequality (A) by induction on $n$,
with this inductive step:
$$
\begin{aligned}
T_{\Cone(T)}(1,y)
&\geq (y+1) T_{\Cone(T-\ell)}(1,y)+2^{n-2} \\
&\geq (y+1) T_{\Cone(S_{n-1})}(1,y)+2^{n-2} \quad \text{ (since }T_{\Cone(T-\ell)}(1,y)\geq T_{\Cone(S_{n-1})}(1,y)\text{ by induction)}\\
&=(y+1) S_{n-1}(y) +2^{n-2}
=S_n(y)
=T_{\Cone(S_n)}(1,y).
\end{aligned}
$$

\begin{proof}[Proof of Lemma~\ref{star-inequality-lemma}.]
Recall \eqref{ext-act-formula} allows us to choose
a linear ordering $\prec$ on the edges of $\Cone(T)$
and express
\begin{equation}
\label{sum-to-tripartition}
 T_{\Cone(T)}(1,y) = \sum_S y^{\extact_\prec(S)}.
\end{equation}
where $S$ runs through all spanning trees of $\Cone(T)$.
We choose a particular linear ordering $\prec$ as follows.
Let $v_0$ be the cone vertex, not in $T$, and let $u_0$ be the unique vertex in $T$ attached to the leaf vertex $\ell$.  Choose a linear order $\prec$ with these properties:
\begin{itemize}
    \item All edges $\{v_0,u\}$ incident to the cone vertex $v_0$ are $\prec$-smaller than all edges $\{u,u'\}$ inside $T$.
     \item In particular, $\{v_0,\ell\}$ has $\prec$-smallest label (call it $-\infty$), and
    $\{\ell,u_0\}$ the $\prec$-largest (call it $+\infty$). 
\end{itemize}
The figure below shows an example of $T, \ell, u_0$ and
$\Cone(T)$:

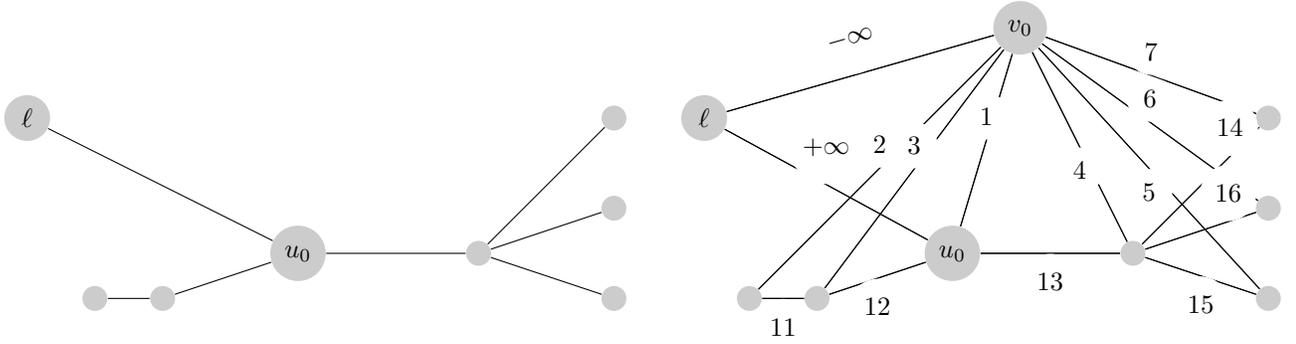
\begin{figure}
\begin{center}
\begin{minipage}{.2\textwidth}
 \hspace*{-1.6\linewidth}
\begin{tikzpicture}
  [scale=.30,auto=left,every node/.style={circle,fill=black!20}]
  \node (l) at (-19,16) {$\ell$};
  \node (u0) at (-7,10) {$u_0$};
  \node (u1) at (-16,8) {};
  \node (u2) at (-13,8) {};
  \node (u6) at (7,16) {};
  \node (u5) at (7,12) {}; 
  \node (u4) at (7,8) {}; 
  \node (u3) at (1,10) {};
\foreach \from/\to in {l/u0,u1/u2,u2/u0,u0/u3,u3/u6,u3/u4,u3/u5}
    \draw (\from) -- (\to);
  \node (v0) at (25,20) {$v_0$};
  \node (l) at (11,16) {$\ell$};
  \node (u0) at (22,10) {$u_0$};
  \node (u1) at (13,8) {};
  \node (u2) at (16,8) {};
  \node (u6) at (36,16) {};
  \node (u5) at (36,12) {}; 
  \node (u4) at (36,8) {}; 
  \node (u3) at (30,10) {};
\foreach \from/\to in {l/u0,u1/u2,u2/u0,u0/u3,u3/u6,u3/u4,u3/u5,
v0/l,v0/u1,v0/u2,v0/u3,v0/u4,v0/u5,v0/u6,v0/u0}
    \draw (\from) -- (\to);
\draw (v0) -- (u0) node [midway, above,fill=white] {$1$};
\draw (v0) -- (u1) node [midway, above,fill=white] {$2$};
\draw (v0) -- (u2) node [midway, above,fill=white] {$3$};
\draw (v0) -- (u3) node [midway, below,fill=white] {$4$};
\draw (v0) -- (u4) node [midway, below,fill=white] {$5$};
\draw (v0) -- (u5) node [midway, above,fill=white] {$6$};
\draw (v0) -- (u6) node [midway, above,fill=white] {$7$};

\draw (v0) -- (l) node [midway, sloped, above,fill=white] {$-\infty$};
\draw (l) -- (u0) node [midway, below, above, fill=white] {$+\infty$};

\draw (u1) -- (u2) node [midway, below,fill=white] {$11$};
\draw (u2) -- (u0) node [midway, below,fill=white] {$12$};
\draw (u3) -- (u0) node [midway, below,fill=white] {$13$};
\draw (u3) -- (u6) node [near end, above,fill=white] {$14$};
\draw (u3) -- (u4) node [midway, below,fill=white] {$15$};
\draw (u3) -- (u5) node [near end, above,fill=white] {$16$};

\end{tikzpicture}
\end{minipage}
\end{center}
\caption{The left depicts a tree $T$ with a leaf $\ell$ whose unique neighbor is labeled $u_0$.  The right depicts $\Cone(T)$, along with one choice of a labeling of its edges via the linear ordered alphabet $-\infty <1<2<3<4<5<6<7<11<12<13<14<15<16<+\infty$, as in the proof.}
\end{figure}

Since vertex $\ell$ has exactly two neighbors $v_0, u_0$ in $\Cone(T)$, one can break the sum \eqref{sum-to-tripartition} into
three subsums, according to whether the spanning
tree $S$ uses only edge $\{\ell,v_0\}$, or 
only edge $\{\ell,u_0\}$, or both edges:
\begin{equation}
 \label{tripartitioned-sums}   
 T_{\Cone(T)}(1,y) 
 = \sum_{\substack{S:\\
           \{\ell,v_0\} \in S,\\
           \{\ell,u_0\} \not\in S}}
           y^{\extact_\prec(t)}
 + \sum_{\substack{S:\\
           \{\ell,v_0\} \not\in S,\\
           \{\ell,u_0\} \in S}} y^{\extact_\prec(S)}
+ \sum_{\substack{S:\\
           \{\ell,v_0\} \in S,\\
           \{\ell,u_0\} \in S}} y^{\extact_\prec(S)}
\end{equation}
For trees $S$ in the first sum or in the second sum
of \eqref{tripartitioned-sums}, $\ell$ is a leaf of $S$, and the map $S \mapsto S - \ell$ bijects them with spanning trees $S-\ell$ in $\Cone(T-\ell)$.  As explained below, our choice of $\prec$ makes
$$
\extact_\prec(S)=
\begin{cases}
    \extact_\prec(S-\ell) & \text{ for }S\text{ in the first sum,}\\
    \extact_\prec(S-\ell)+1 & \text{ for }S\text{ in the second sum.}\\
\end{cases}
$$
We claim that for $S$ in the first sum, $\ExtAct_\prec(S)=\ExtAct(S -\ell)$,
while for $S$ in the second sum, $\ExtAct_\prec(S)=\ExtAct(S -\ell) \sqcup \{\{\ell,v_0\}\}$.  In both cases, this holds because
the two extra edges $\{ \ell,v_0\}, \{\ell,u_0\}$ of $\Cone(T)$ not present in $\Cone(T -\ell)$ carry 
the labels $-\infty, +\infty$.

Consequently, the first two sums in \eqref{tripartitioned-sums} together sum to $(1+y)T_{\Cone(T-\ell)}(1,y)$.
Therefore, the desired inequality asserted by Lemma~\ref{star-inequality-lemma} will follow if one can exhibit, within the third sum of \eqref{tripartitioned-sums}, at
least $2^{n-2}$ summands of the form $y^0$, that is,  where $S$ has  $\extact_\prec(S)=0$.  Here is a recipe to produce $2^{n-2}$ such spanning trees $S$, starting with any subset $F$ of the $n-2$ edges of the tree $T  - \ell$.  Since $F$ is a subset of edges in a tree, it forms a forest, whose connected components partition the vertices of $T  - \ell$.  Label this vertex partition 
$$
V(T  - \ell) =U_0 \sqcup U_1 \sqcup \cdots \sqcup U_r 
$$
so that $u_0$ lies in $U_0$.  In each of the other components $U_i$ for $i=1,2,\ldots,r$, pick the unique vertex $u_i$ in $U_i$ so that the edge $\{v_0,u_i\}$ is $\prec$-smallest among all edges $\{v_0,u\}_{u \in U_i}$.  We check that the set 
$$
S:=F \cup 
\{\{v_0,\ell\},
\{u_0,\ell\}\}\cup \{ \{v_0,u_i\} \}_{i=1,2,\ldots,r}
$$
is a spanning tree with $\extact_\prec(S)=0$ contributing $y^0$ the third sum.  To see this, we check that any edge $f$ of $\Cone(T)$ outside of $S$ cannot lie in $\ExtAct(S)$. 
\vskip .1in
\noindent
{\sf Case 1.} $f$ lies in $T$.

Then $f$ connects two components $U_i, U_j$ of the forest $F$ for $i \neq j$. But then $f \not\in \ExtAct(S)$ since either of $e=\{u_i,v_0\}$ or $e=\{u_j,v_0\}$ has $S - e \cup f$ a spanning tree, and $e \prec f$. 

\vskip .1in
\noindent
{\sf Case 2.}
$f$ does not lie in $T$, say $f=\{v_0,u\}$ with
$u$ lying component $U_i$ of the forest $F$.
\vskip .1in
\noindent
{\sf Case 2a.} $i \geq 1$, so $U_i \neq U_0$.
Then $f \not\in \ExtAct(S)$ since $e=\{u_i,v_0\}$ has $S - e \cup f$ a spanning tree, and $e \prec f$.
\vskip .1in
\noindent
{\sf Case 2b.} $i=0$ so $u \in U_0$.
Then $f \not\in \ExtAct(S)$ since  $e=\{\ell,v_0\}$  has $S - e \cup f$ a spanning tree, and $e \prec f$.
\vskip .1in

  Lastly, when $T=S_n$, we claim equality holds in the Lemma because the third sum of \eqref{tripartitioned-sums} contains {\it only} the $2^{n-2}$ different trees $S$ constructed above. Spanning trees in the third sum contain both edges $\{\ell,v_0\}, \{\ell,u_0\}$ incident to leaf $\ell$, so they 
  must omit the edge $\{v_0,u_0\}$, and for each of the $n-2$ other leaves $u_1,\ldots,u_{n-2}$ of $S_n$, they must contains exactly one of the two edges $\{u_i,v_0\}, \{u_i,u_0\}$, but not both.
\end{proof}

\subsection{Proof of inequality (B)}

Recall Corollary~\ref{fan-doubled-fan-corollary} already showed that the recursively defined
polynomial
$$
F_0(y):=0, F_1(y):=1, \quad
F_n(y) := 
\begin{cases}
F_{n-1}(y) + F_{n-2}(y) &\text{ for }n\text{ even,}\\
F_{n-1}(y) + yF_{n-2}(y) &\text{ for }n\text{ odd.}
\end{cases}
$$
has the following interpretations:
\begin{align*}
T_{\Cone(P_n)}(1,y)&=F_{2n}(y),\\
T_{\Coneplus{\ell}(P_n)}(1,y)&=F_{2n+1}(y).
\end{align*}

Consequently, inequality (B) will follow from this stronger lemma, that facilitates an
inductive proof.

\begin{lemma}
\label{intertwining-induction-lemma}
Let $v$ be any vertex in a tree $T$ on $n$ vertices,
and let $v_0$ be the cone vertex of $\Cone(T)$. 
Then one has these coefficientwise inequalities:
\begin{itemize}
    \item[(i)] 
    $
    T_{\Cone(T)}(1,y) \leq F_{2n}(y),
    $
    with equality if $T=P_n$.
    \item[(ii)]
    $
    T_{\Cone(T)/\{v_0,v\}}(1,y) \leq F_{2n-1}(y),
    $
    with equality if $T=P_n$ and $v$ is an end vertex of $P_n$.
\end{itemize}
\end{lemma}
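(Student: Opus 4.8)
The plan is to prove parts (i) and (ii) simultaneously by induction on $n$, carrying a third auxiliary bound on \emph{doubled} cones as the glue that makes the two statements intertwine. Writing $c:=|V(C)|$ for a tree $C$, alongside (i) and (ii) I would track the statement (iii): $T_{\Coneplus{w}(C)}(1,y)\le F_{2c+1}(y)$ for every vertex $w$ of $C$, with equality when $C=P_c$ and $w$ is an end vertex. Statement (iii) is an immediate consequence of (i) and (ii) at the same level: deleting and contracting one of the two parallel copies of the edge $\{v_0,w\}$ in $\Coneplus{w}(C)$ yields $T_{\Coneplus{w}(C)}(1,y)=T_{\Cone(C)}(1,y)+y\cdot T_{\Cone(C)/\{v_0,w\}}(1,y)$ (the contracted copy leaves the other copy as a loop, contributing the factor $y$), so (iii) follows from $F_{2c+1}(y)=F_{2c}(y)+yF_{2c-1}(y)$ and is an equality exactly in the case recorded by Corollary~\ref{fan-doubled-fan-corollary}.

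For part (i) I would peel an arbitrary leaf $\ell$ of $T$ with neighbor $u$. The specialized deletion--contraction identity \eqref{specialized-leaf-del-con} reads $T_{\Cone(T)}(1,y)=T_{\Cone(T-\ell)}(1,y)+T_{\Coneplus{u}(T-\ell)}(1,y)$; bounding the first summand by (i) and the second by (iii), both applied to the $(n-1)$-vertex tree $T-\ell$, gives $T_{\Cone(T)}(1,y)\le F_{2n-2}(y)+F_{2n-1}(y)=F_{2n}(y)$ (using $F_{2n}=F_{2n-1}+F_{2n-2}$), with equality for $T=P_n$ by taking $\ell$ to be an end vertex and tracking equality cases.

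The crux is part (ii), and the key structural observation is that $\Cone(T)/\{v_0,v\}$ factors across a cut vertex. Let $z$ be the vertex obtained by merging $v_0$ and $v$; in $\Cone(T)/\{v_0,v\}$ it is adjacent to every other vertex, with a doubled edge to each neighbor of $v$ in $T$, and deleting $z$ leaves exactly the forest $T-v$, whose components $C_1,\dots,C_k$ are the branches of $T$ at $v$ rooted at the neighbors $w_1,\dots,w_k$ of $v$. Hence $z$ is a cut vertex whose blocks are precisely the graphs $\Coneplus{w_i}(C_i)$, and \eqref{cut-vertex-Tutte-fact} gives $T_{\Cone(T)/\{v_0,v\}}(1,y)=\prod_{i=1}^k T_{\Coneplus{w_i}(C_i)}(1,y)$ (when $v$ is a leaf, $k=1$ and this is the single doubled-cone factor). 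Applying (iii) to each branch $C_i$, which has fewer than $n$ vertices, bounds the product by $\prod_i F_{2c_i+1}(y)$, where $c_i:=|V(C_i)|$ and $\sum_i c_i=n-1$.

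It then remains to prove the coefficientwise Fibonacci-polynomial inequality $\prod_{i=1}^k F_{2c_i+1}(y)\le F_{2(\sum_i c_i)+1}(y)=F_{2n-1}(y)$. By induction on $k$ (multiplying coefficientwise inequalities between polynomials with nonnegative coefficients) this reduces to the two-factor case, which I expect to follow from the addition formula $F_{2a+2b+1}(y)=F_{2a+1}(y)F_{2b+1}(y)+F_{2a}(y)F_{2b}(y)$ for $a,b\ge 1$, a polynomial refinement of the classical identity $F_{m+n}=F_mF_{n+1}+F_{m-1}F_n$; since $F_{2a}(y),F_{2b}(y)$ have nonnegative coefficients, this gives the inequality with explicit nonnegative slack, and equality for $(T,v)=(P_n,\text{endpoint})$ because then $k=1$, $C_1=P_{n-1}$, $w_1$ is an end vertex, and (iii) is an equality. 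I expect the main obstacle to be establishing this addition formula cleanly from the alternating recurrence \eqref{Fibonacci-polynomial-definition}: the even/odd case split makes a direct induction awkward, and I anticipate it is smoothest to encode $F_n(y)$ via a $2\times 2$ transfer-matrix product and read the formula off from a splitting of that product.
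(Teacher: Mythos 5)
Your proposal is correct, and while its overall skeleton (a simultaneous induction on $n$) matches the paper's, your treatment of part (ii) is genuinely different. The paper's step for (i) is essentially yours: it performs two successive deletion--contractions on $\{v_0,v\}$ and then $\{v_0v,w\}$, which amounts to inlining your auxiliary statement (iii); your (iii) itself is a parallel-edge deletion--contraction identity generalizing the paper's \eqref{specialized-doubled-leaf-del-con} from leaves to arbitrary vertices. For (ii), however, the paper fixes a single neighbor $w$ of $v$, deletes/contracts one copy of the doubled edge $\{v_0v,w\}$, and obtains a two-factor cut-vertex product $T_{\Cone(T_v)/\{v_0,v\}}(1,y)\cdot T_{\Cone(T_w)}(1,y)$ plus a contraction term $y\cdot T_{\Cone(T/\{v,w\})/\{v_0,vw\}}(1,y)$; the Fibonacci input it then needs is the mixed-parity inequality $F_n(y)F_m(y)\le F_{n+m-1}(y)$ for $n,m$ not both even (Lemma~\ref{Fibonacci_ineq}), proved by a short induction. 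You instead split $\Cone(T)/\{v_0,v\}$ in one stroke at the cut vertex $z=v_0v$ into the blocks $\Coneplus{w_i}(C_i)$ over \emph{all} branches of $T$ at $v$; your identification of these blocks is correct (and \eqref{cut-vertex-Tutte-fact} applies to multigraphs, as the paper's own use of it confirms), so you need only the odd--odd product inequality $F_{2a+1}(y)F_{2b+1}(y)\le F_{2a+2b+1}(y)$. Your anticipated addition formula is indeed true, and the transfer-matrix argument you sketch closes the gap cleanly: setting $A=\left(\begin{smallmatrix}1+y&1\\ 1&1\end{smallmatrix}\right)$, one checks $F_{2k+1}(y)=(A^k)_{11}$ and $F_{2k}(y)=(A^k)_{21}=(A^k)_{12}$ (by symmetry of $A$), so the $(1,1)$ entry of $A^{a+b}=A^aA^b$ gives exactly $F_{2a+2b+1}(y)=F_{2a+1}(y)F_{2b+1}(y)+F_{2a}(y)F_{2b}(y)$; your equality cases also track correctly (alternatively, both equality claims follow from Corollary~\ref{fan-doubled-fan-corollary}, as the paper notes). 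Comparing the two: the paper's route requires the slightly stronger mixed-parity lemma, but that lemma has a three-line inductive proof and no matrix bookkeeping; your route buys a cleaner structural statement (a full block factorization of $\Cone(T)/\{v_0,v\}$ rather than peeling one branch at a time), an exact identity whose nonnegative slack $F_{2a}(y)F_{2b}(y)$ explicitly quantifies the gap in (ii), and an inductive step for (ii) requiring no further deletion--contraction at all.
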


\begin{proof}[Proof.]
The cases of equality in (i),(ii) are both implied by
Corollary~\ref{fan-doubled-fan-corollary}, since
one can check that when $v$ is an end vertex of $P_n$, then 
$\Cone(T)/\{v_0,v\} \cong \Coneplus{\ell}(P_{n-1})$ where
$\ell$ is an end vertex of $P_{n-1}$.

We prove the inequalities in (i),(ii) 
simultaneously via induction on $n$. 
Both base cases $n=1$ are easy.

\vskip.1in
\noindent
{\sf Inductive step for proof of (i) with $n \geq 2$.}
Choose $v$ to be any {\it leaf} vertex of $T$, with unique
neighbor vertex $w$ in $T$.  One can use \eqref{Tutte-recursive-definition} twice:  first one can perform deletion and contraction
on the edge $\{v_0,v\}$, and then in 
the graph $\Cone(T)/\{v_0,v\}$
(which contains a single contracted vertex labeled $v_0v$) one 
can perform deletion and
contraction on the edge $\{v_0v, w\}$.  This gives the following
calculation, explained further below:
\begin{align}
T_{\Cone(T)}(x,y)
\notag
&=T_{\Cone(T)\setminus \{v_0,v\}}(x,y) 
   + T_{\Cone(T)/\{v_0,v\}}(x,y)\\
\notag
&=T_{\Cone(T)\setminus \{v_0,v\}}(x,y) 
   + T_{\left(\Cone(T)/\{v_0,v\}\right) \setminus \{v_0v,w\}}(x,y)
   + T_{\left(\Cone(T)/\{v_0,v\}\right)/ \{v_0v,w\} }(x,y)\\
\label{two-deletion-contractions-round1}
&=x\cdot T_{\Cone(T-v)}(x,y) 
   + T_{\Cone(T-v)}(x,y)
   + y\cdot T_{\Cone(T-v)/\{v_0,w\} }(x,y)
\end{align}
The last equality used three graph isomorphisms: 
\begin{itemize}
 \item $\Cone(T)\setminus \{v_0,v\}$ is isomorphic to $\Cone(T-v)$
with an isthmus edge $\{v,w\}$ attached to $w$,
    \item $\left(\Cone(T)/\{v_0,v\} \right) \setminus \{v_0v,w\}$ is isomorphic to $\Cone(T-v)$, 
\item $\left( \Cone(T)/\{v_0,v\} \right)/ \{v_0v,w\}$ is isomorphic to
$\Cone(T-v)/\{v_0,w\}$ with a loop attached to vertex $v_0$.
\end{itemize}
Therefore, setting $x=1$ in \eqref{two-deletion-contractions-round1}, and then applying (i),(ii) inductively to vertex $w$ in $T-v$, one obtains
\begin{align*}
T_{\Cone(T)}(1,y)
&= T_{\Cone(T-v)}(1,y) 
   + T_{\Cone(T-v)}(1,y)
   + y\cdot T_{\Cone(T-v)/\{v_0,w\} }(1,y)\\
&\leq F_{2(n-1)}(y) + \left( F_{2(n-1)}(y) + y\cdot F_{2(n-1)-1}(y)\right)\\
&= F_{2n-2)}(y) +  F_{2n-1}(y)\\
&= F_{2n}(y)
\end{align*}
where the last two equalities used the 
two cases of the recursive definition of $F_n(y)$.

\vskip.1in
\noindent
{\sf Inductive step for proof of (ii) with $n \geq 2$.}
Given $v$, choose any neighbor vertex $w$ of $v$ in $T$.  Deleting the edge $\{v,w\}$ from $T$ leaves two connected components, a subtree $T_v$ containing $v$ and a subtree $T_w$ containing $w$, having $n_v, n_w$ vertices, respectively, with $n_v+n_w=n$.

Now the graph $\Cone(T)/\{v_0,v\}$ contains a single contracted vertex labeled $v_0v$, and contains two parallel copies
$e,e'$ of the edge $\{v_0v,w\}$.  Performing deletion and contraction on one copy $e$ of this parallel edge, and 
using \eqref{Tutte-recursive-definition}, gives the following
calculation, explained further below:
\begin{align}
\notag 
T_{\Cone(T)/\{v_0,v\}}(x,y)
&=T_{\left( \Cone(T)/\{v_0,v\}\right) \setminus e}(x,y)
+ T_{\left( \Cone(T)/\{v_0,v\} \right) / e }(x,y)\\
\label{two-deletion-contractions-round2}
&=T_{\Cone(T_v)/\{v_0,v\}}(x,y) \cdot T_{\Cone(T_w)}(x,y)
+ y\cdot T_{\Cone(T/\{v,w\})/\{v_0,vw\}}(x,y).
\end{align}
The last equality used two facts.
The first fact is that one can apply
\eqref{cut-vertex-Tutte-fact} to $\left( \Cone(T)/\{v_0,v\} \right) \setminus e$, since it has the contracted vertex $v_0v$ as a cut-vertex.  Removing vertex $v_0v$ creates two components $C_v,C_w$, where 
 $C_v$ together with $v_0v$ forms a graph isomorphic to $\Cone(T_v)/\{v_0,v\}$, and $C_w$ together with 
  $v_0v$ forms a graph isomorphic to $\Cone(T_w)$.
 The second fact is that
  $\left(\Cone(T)/\{v_0,v\} \right)/e$ is isomorphic to $\Cone(T/\{v,w\})/\{v_0,vw\}$ with a loop $e'$ attached to $v_0vw$.
  
  Thus setting $x=1$ in \eqref{two-deletion-contractions-round2} and then using (i),(ii) inductively
gives
\begin{align*}
T_{\Cone(T)/\{v_0,v\}}(1,y)
&=T_{\Cone(T_v)/\{v_0,v\}}(1,y) \cdot
  T_{\Cone(T_w)}(1,y)
  + y \cdot T_{\Cone(T/\{v,w\})/\{v_0,vw\}}(1,y)\\
  &\leq F_{2n_v-1}(y) \cdot F_{2n_w}(y) + y  F_{2(n-1)-1}(y) \quad\text{ by induction}\\
  &\leq F_{2(n_v+n_w)-2}(y) + y F_{2n-3}(y) \quad \quad\text{ by Lemma~\ref{Fibonacci_ineq} below}\\
  &= F_{2n-2}(y) + y F_{2n-3}(y)
  \quad\quad\quad\quad\quad\text{ since }n_v+n_w=n\\
  &= F_{2n-1}(y).
\end{align*}
where the last equality used the recursive definition of $F_n(y)$.
\end{proof}

\begin{lemma}\label{Fibonacci_ineq}
For $n,m \geq 1$ not both even, one has a coefficientwise inequality
$
F_n(y)F_m(y) \leq F_{n+m-1}(y).
$
\end{lemma}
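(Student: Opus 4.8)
The plan is to prove the coefficientwise inequality $F_n(y)F_m(y)\le F_{n+m-1}(y)$ by strong induction on $n+m$. As a preliminary I would record that every $F_k(y)$ has nonnegative coefficients: this is immediate from \eqref{Fibonacci-polynomial-definition} by induction, since $F_0,F_1$ are nonnegative and each recursive step adds a nonnegative combination of earlier polynomials. This nonnegativity is exactly what allows partial sums to be compared coefficientwise and what makes facts like $F_k\le F_{k+1}$ automatic. For the base case I would take $\min(n,m)=1$: if, say, $m=1$ then $F_1(y)=1$, so $F_n(y)F_m(y)=F_n(y)=F_{n+m-1}(y)$ with equality, and the hypothesis holds since $1$ is odd.

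For the inductive step assume $n,m\ge 2$ and not both even, so at least one index is odd and hence at least $3$. The central idea is to reduce a \emph{carefully chosen} index, so that (i) the two resulting products are again \emph{not both even} (keeping us inside the hypotheses and dodging precisely the excluded both-even case), and (ii) the recurrence used matches the parity of $n+m-1$. Concretely, when $n$ and $m$ are both odd, the index $n+m-1$ is odd; I would reduce $n$ (which is odd and $\ge 3$, so $n-2\ge 1$) via $F_n=F_{n-1}+yF_{n-2}$:
\[
F_n(y)F_m(y)=F_{n-1}(y)F_m(y)+y\,F_{n-2}(y)F_m(y).
\]
Here $(n-1,m)$ is (even, odd) and $(n-2,m)$ is (odd, odd), both not both even, so induction gives $F_{n-1}F_m\le F_{n+m-2}$ and $F_{n-2}F_m\le F_{n+m-3}$, whence $F_nF_m\le F_{n+m-2}+yF_{n+m-3}=F_{n+m-1}$, the final equality being the odd-index case of \eqref{Fibonacci-polynomial-definition}.

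When the parities differ, say (by the symmetry $F_nF_m=F_mF_n$) that $n$ is odd and $m$ is even, the index $n+m-1$ is even, and I would instead reduce the \emph{even} index $m$ via $F_m=F_{m-1}+F_{m-2}$:
\[
F_n(y)F_m(y)=F_n(y)F_{m-1}(y)+F_n(y)F_{m-2}(y).
\]
Now $(n,m-1)$ is (odd, odd) and $(n,m-2)$ is (odd, even), both not both even, so induction yields $F_nF_m\le F_{n+m-2}+F_{n+m-3}=F_{n+m-1}$, this time using the even-index case of \eqref{Fibonacci-polynomial-definition}. The one boundary wrinkle is $m=2$, where $F_{m-2}=F_0=0$ and the pair $(n,0)$ falls outside the lemma's range; there I would simply use the trivial bound $F_nF_0=0\le F_{n-1}$ in place of the inductive hypothesis.

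I expect the genuine difficulty to be exactly this parity bookkeeping rather than any computation: one must reduce an odd index when both are odd, but the even index when the parities differ, so that neither descendant product is of the forbidden both-even form and so that the combining recurrence carries the factor $y$ precisely when $n+m-1$ is odd. This is also the structural reason the hypothesis ``not both even'' is needed — reducing an odd index against an even partner would produce a both-even product, closing the induction only outside the excluded case. The $m=2$ endpoint is the sole place needing a direct argument instead of the inductive step.
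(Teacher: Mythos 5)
Your proof is correct and follows essentially the same route as the paper's: induction on $n+m$ with two parity cases, expanding one factor by the recurrence \eqref{Fibonacci-polynomial-definition} whose parity matches $n+m-1$ (the odd recurrence when $n,m$ are both odd, the even recurrence applied to the even index in the mixed case). Your treatment is in fact slightly more careful than the paper's, since you make explicit the base case $\min(n,m)=1$ and the $m=2$ boundary where $F_0=0$ falls outside the lemma's hypotheses, both of which the paper leaves implicit.
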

\begin{proof}
Induct on $n+m$, in two cases. If $n,m$ are both odd then 
$$
\begin{aligned}
F_n(y)F_m(y) = F_n(y)[F_{m-1}(y)+yF_{m-2}(y)]
&= F_n(y)F_{m-1}(y)+yF_n(y)F_{m-2}(y)\\
&\leq F_{n+m-2}(y) + yF_{n+m-3}(y) \quad  \text{ by induction}\\
&= F_{n+m-1}(y).
\end{aligned}
$$
If $n$ is even and $m$ is odd then 
$$
\begin{aligned}
    F_n(y)F_m(y) 
    = F_m(y)[F_{n-1}(y)+F_{n-2}(y)]
    &= F_m(y)F_{n-1}(y)+F_m(y)F_{n-2}(y)\\
&\leq F_{n+m-2}(y) + F_{n+m-3}(y) \quad\text{ by induction}\\
&= F_{n+m-1}(y).\qedhere
\end{aligned}
$$
\end{proof}


\begin{section} {Discussion, remarks, questions}
\label{remarks-section}

As mentioned in the Introduction, our goal was to use the graphs $G=\Cone(T)$ as test cases for the Main Question on how the ``shape" 
of a graph $G$ affects the sandpile group $K(G)$, including its size $\tau(G)=|K(G)|$, its minimal number of generators $\mu(G)$, and the coefficients of $T_G(1,y)$.  We discuss this further here.

\subsection{Csikv\'ari's poset on trees}
Theorems~\ref{generators}, \ref{generators-bounds-theorem} suggest that the nunber of leaves $\ell(T)$ in the tree $T$ has some {\it positive correlation} with $\mu(\Cone(T))$, while Theorem~\ref{tutte-bounds-theorem} suggests $\ell(T)$ has some {\it negative correlation} with both $\tau(\Cone(T))$ and the coefficients of $T_{\Cone(T)}(1,y)$.

Seeing this, it is natural to consider work of 
P. Csikv\'ari \cite{OnaPosetofTrees, Csikvari2} which introduced an interesting ranked poset structure on the set of all trees with $n$ vertices, having the path $P_n$ as its
unique minimal element, the star $S_n$ as its unique maximal, with rank function $\mathrm{rank}(T)=\ell(T)-2$; see \cite[Thm. 2.4, Cor. 2.5]{OnaPosetofTrees}. 
Many interesting graph invariants for trees behave in a monotone
fashion with respect to this poset, so that they
are bounded by the values on $P_n, S_n$; see \cite[\S 11]{Csikvari2}.
His poset is the transitive closure of
covering relations $T \lessdot T'$ coming from
this operation called a {\it generalized tree shift} \cite[Defn. 2.1]{OnaPosetofTrees}:
\begin{quote}
 Let $(x,y)$ be an ordered pair of distinct vertices in a tree $T$ such that
 all interior vertices on the path from $x$ to $y$
have degree two in $T$. Let $x'$ be the unique neighbor of $y$ on this path to $x$; possibly $x'=x$.  Then the {\it generalized tree shift} $T'$ (of $T$ at $x,y$) is obtained from $T$ by deleting all edges between $y$ and its $T$-neighbors, except for the edge $\{y,x'\}$, and adding in edges between $x$ and all of the former $T$-neighbors of $y$ except $x'$.
\end{quote}

For example, one can check that for $p \geq 3$, one has $CT(p,s) \lessdot CT(p-1,s+1)$ in Csikv\'ari's poset on trees with $n=p+s$ vertices.
Here is his poset on trees with $n=7$ vertices:

\begin{center}
\includegraphics[scale=0.15]{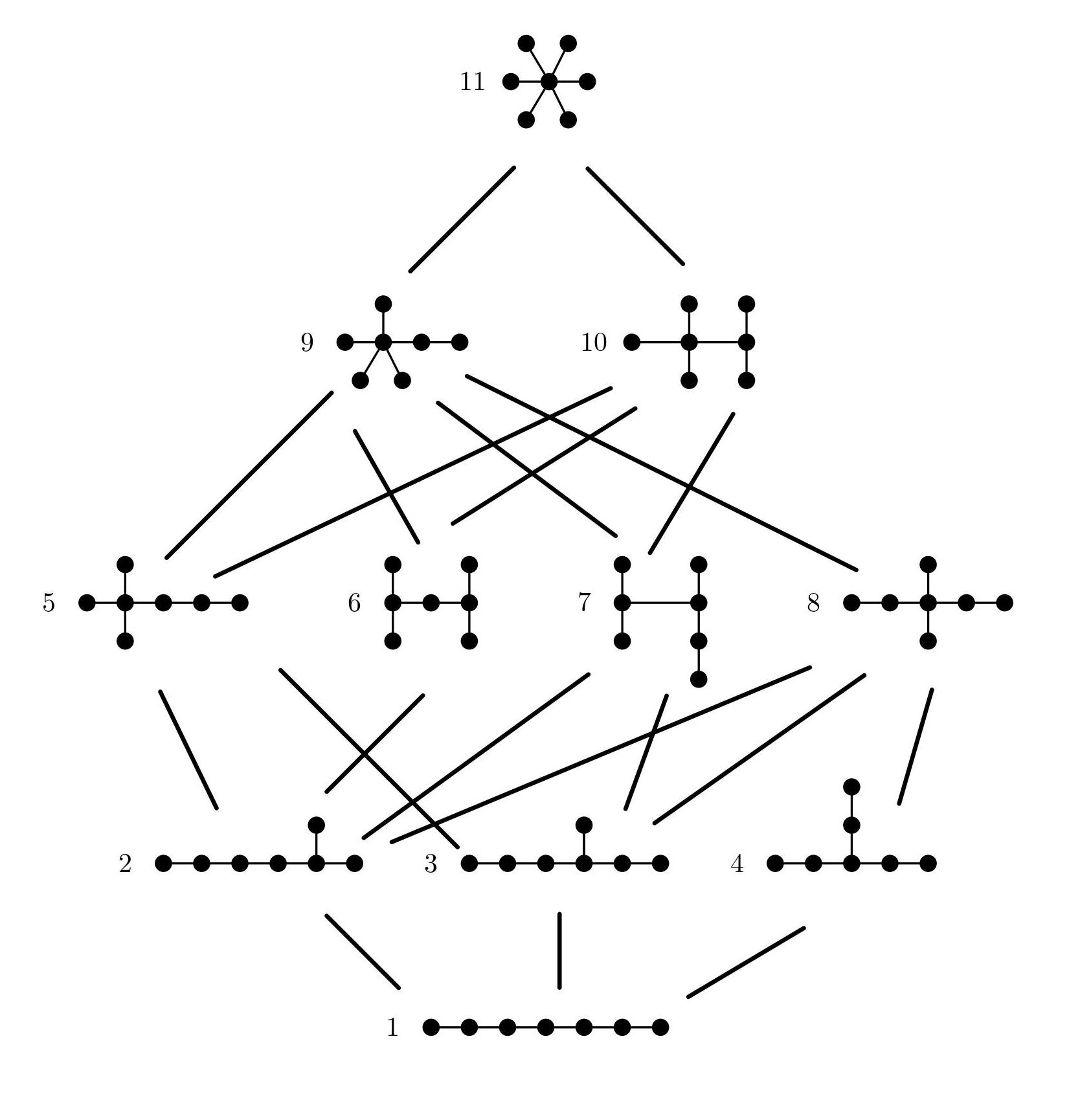}

\end{center}

Here is the data 
on $K(\Cone(T))$ and $T_{\Cone(T)}(1,y)$
for the trees $T$ on $7$ vertices shown above.

\begin{center}
\begin{tabular}{ |c|c|c| } 
 \hline
 $T$& $K(\Cone(T))$ & $T(1,y)$ \\
  \hline\hline
 1 & $\Z_{377}$ & $y^6+7y^5+26y^4+63y^3+104y^2+122y+64$ \\
  \hline
  2 & $\Z_2\oplus \Z_{178}$ & $y^6+7y^5+25y^4+59y^3+96y^2+104y+64$ \\
  \hline
 3 & $\Z_{353}$ & $y^6+7y^5+25y^4+58y^3+94y^2+104y+64$ \\
  \hline
 4 & $\Z_{5}\oplus \Z_{70}$ & $y^6+7y^5+25y^4+57y^3+92y^2+104y+64$ \\
  \hline
 5 & $\Z_2\oplus \Z_{162}$ & $y^6+7y^5+24y^4+53y^3+83y^2+92y+64$ \\
  \hline
 6 & $\Z_{4}\oplus \Z_{84}$ & $y^6+7y^5+24y^4+55y^3+89y^2+96y+64$ \\
  \hline
 7 & $\Z_{332}$ & $y^6+7y^5+24y^4+54y^3+86y^2+96y+64$ \\
  \hline
 8 & $\Z_{320}$ & $y^6+7y^5+24y^4+52y^3+80y^2+92y+64$ \\
  \hline
 9 & $(\Z_2)^2\oplus \Z_{72}$ & $y^6+7y^5+23y^4+47y^3+68y^2+78y+64$ \\
  \hline
 10 & $(\Z_2)^2\oplus \Z_{76}$  & $y^6+7y^5+23y^4+49y^3+76y^2+84y+64$ \\
  \hline
 11 & $(\Z_2)^4\oplus \Z_{16}$  & $y^6+7y^5+22y^4+42y^3+57y^2+63y+64$ \\ 
 \hline
\end{tabular}
\end{center}

The data suggests that as one goes up in Csikv\'{a}ri's order, $\mu(\Cone(T))$ often increases,
but this is not always the case.
For example $T_4 \lessdot T_8$, but $\mu(\Cone(T_4))=2 > 1=\mu(\Cone(T_8))$.

On the other hand, computations using {\tt Sage/Cocalc} code written by Trevor Karn on trees up to $13$ vertices suggested the next conjecture, which appeared in the first {\tt arXiv} version of this paper, and was subsequently proven by Changxin Ding \cite{Ding}.

\begin{conjecture} 
If $T<T'$ in  Csikv\'{a}ri's poset, then
$T_{\Cone(T)}(1,y) \geq T_{\Cone(T')}(1,y)$ coefficientwise in $\Z[y]$.
\end{conjecture}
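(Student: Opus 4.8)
The plan is to reduce to a single covering relation and then exploit the fact that, inside the cone, a generalized tree shift changes only a controlled set of edges, which opens up a deletion--contraction induction in the spirit of Lemmas~\ref{star-inequality-lemma} and \ref{intertwining-induction-lemma}. First I would reduce to the case where $T'$ is a single generalized tree shift of $T$ at a pair $(x,y)$: since Csikv\'ari's order is the transitive closure of these covering relations, and coefficientwise $\geq$ in $\Z[y]$ is transitive, it suffices to prove $T_{\Cone(T)}(1,y) \geq T_{\Cone(T')}(1,y)$ for one shift. Writing $x'$ for the neighbor of $y$ on the path from $x$ to $y$, and $w_1,\dots,w_k$ for the other neighbors of $y$, the key structural observation is that $\Cone(T)$ and $\Cone(T')$ have the same vertex set and share a common spanning subgraph $H$ (all cone edges $\{v_0,u\}$, all path edges, the edge $\{y,x'\}$, and all edges internal to the branches), differing only in that $\Cone(T)$ contains the edges $\{y,w_i\}$ whereas $\Cone(T')$ contains the edges $\{x,w_i\}$. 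Thus the shift is realized inside the cone as a Kelmans-type reattachment of the $k$ branch edges from $y$ to $x$.

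Next I would interpolate and telescope. Let $G^{(j)}$ be the cone over the tree with branches $1,\dots,j$ attached at $x$ and branches $j{+}1,\dots,k$ attached at $y$, so that $G^{(0)}=\Cone(T)$ and $G^{(k)}=\Cone(T')$. Consecutive graphs differ in the single edge $e_j=\{y,w_j\}$ versus $e_j'=\{x,w_j\}$, and both deletions $G^{(j-1)}\setminus e_j$ and $G^{(j)}\setminus e_j'$ equal the \emph{same} graph (branch $j$ attached only through $v_0$). Since $w_j$ remains joined to $v_0$, neither $e_j$ nor $e_j'$ is a loop or coloop, so \eqref{Tutte-recursive-definition} at $x=1$ gives $T_{G^{(j-1)}}-T_{G^{(j)}}=T_{G^{(j-1)}/e_j}-T_{G^{(j)}/e_j'}$, and summing yields
\[
T_{\Cone(T)}(1,y)-T_{\Cone(T')}(1,y)=\sum_{j=1}^{k}\Bigl(T_{G^{(j-1)}/e_j}(1,y)-T_{G^{(j)}/e_j'}(1,y)\Bigr).
\]
Contracting $e_j$ merges $w_j$ (and its branch) into $y$ and doubles the cone edge there, producing a graph of the form $\Coneplus{y}(\cdot)$ on one fewer vertex; contracting $e_j'$ instead produces $\Coneplus{x}(\cdot)$. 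Crucially, each summand again compares a branch attached (with a doubled cone edge) at $y$ versus at $x$, with the remaining branches frozen and with the degree-two condition along the path from $x$ to $y$ undisturbed by the contraction; that is, each summand is a strictly smaller instance of the same shift-monotonicity phenomenon.

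I would therefore set up a simultaneous strong induction on the number of vertices, over the enlarged class of cones and doubled cones $\Coneplus{v}$ over trees, whose inductive hypothesis asserts coefficientwise monotonicity of $T_{(\cdot)}(1,y)$ under moving a branch (and a doubled cone edge) from $y$ to $x$ across a degree-two path. The base cases are small doubled cones over paths, handled via Corollary~\ref{fan-doubled-fan-corollary}. The main obstacle, and where the real content lies, is formulating this generalized hypothesis precisely enough that each contraction term above falls under it and the induction closes --- in particular tracking the position of the doubled cone edge and verifying that the contracted graphs remain within the class. I would emphasize that a naive one-branch-at-a-time interpolation of the \emph{uncontracted} graphs $G^{(j)}$ does not telescope monotonically, since an intermediate, more balanced tree can have a strictly larger evaluation than either endpoint; it is only after contraction that the comparisons become individually monotone.

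As a cross-check and possible alternative, one could attack the common-core description directly through \eqref{ext-act-formula}, seeking an external-activity-nonincreasing injection from the spanning trees of $\Cone(T')$ into those of $\Cone(T)$ obtained by swapping each chosen edge $\{x,w_i\}$ back to $\{y,w_i\}$. This would give the coefficientwise inequality degree by degree. The difficulty in that route is choosing a compatible edge order on $H$ and repairing those swaps that would otherwise create a cycle (when $y$ already lies on the $w_i$-side after deleting $\{x,w_i\}$), and one again expects the degree-two condition on the path from $x$ to $y$ to be exactly what makes such a repair well-defined and activity-monotone.
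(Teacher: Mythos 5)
First, a point of orientation: the paper does not prove this statement at all. It appears there as a conjecture, supported by computer experiments, and its proof is explicitly attributed to subsequent work of Changxin Ding \cite{Ding}. So there is no in-paper argument to compare against, and your proposal must stand on its own. As written, it does not: its central reduction rests on a claim that is false, and the falsity is visible inside your own write-up.

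Your telescoping identity is correct: since $G^{(j-1)}\setminus e_j$ and $G^{(j)}\setminus e_j'$ are the same graph and neither swapped edge is a loop or coloop, deletion--contraction gives
\begin{equation*}
T_{G^{(j-1)}}(1,y)-T_{G^{(j)}}(1,y)\;=\;T_{G^{(j-1)}/e_j}(1,y)-T_{G^{(j)}/e_j'}(1,y).
\end{equation*}
But this identity says that each \emph{contracted} difference is equal, as a polynomial, to the corresponding \emph{uncontracted} consecutive difference. Hence your closing caveat --- that the uncontracted interpolation through the intermediate trees $G^{(j)}$ need not be monotone --- is not a technicality that contraction repairs; it is a refutation of the claim that ``each summand is a strictly smaller instance of the same shift-monotonicity phenomenon.'' Contraction cannot make the summands nonnegative, because it does not change their values. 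A concrete counterexample: let $D_{a,b}$ be the double star with adjacent vertices $x,y$, with $a$ leaves at $x$ and $b$ leaves at $y$. The reduced Laplacian of $\Cone(D_{a,b})$ at the cone vertex is $L_{D_{a,b}}+I$, and a Schur complement on the leaf block gives $\tau(\Cone(D_{a,b}))=2^{a+b-2}(ab+4a+4b+12)$; as sanity checks, $D_{1,1}=P_4$ gives $21=F_8$ and $D_{6,0}=S_8$ gives $2^6\cdot 9=576$, matching the paper's formulas. Now take $T=D_{1,5}$ and the generalized tree shift at $(x,y)$ (the degree-two condition is vacuous, and $x$ is not a leaf, so this is a genuine step upward), which yields $T'=D_{6,0}\cong S_8$. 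The conjectured inequality holds at the endpoints: $\tau=656$ for $D_{1,5}$ versus $576$ for $D_{6,0}$. But the first step of your interpolation is $D_{1,5}\to D_{2,4}$, and $\tau(\Cone(D_{2,4}))=2^4\cdot 44=704$, so the first summand evaluates at the Tutte variable equal to $1$ to $656-704=-48<0$. Since a coefficientwise-nonnegative polynomial cannot be negative at $1$, the first contracted summand is \emph{not} coefficientwise nonnegative, and no formulation of your ``generalized inductive hypothesis'' over cones and doubled cones can prove it so --- the statement to be proved inductively is simply false. Any correct proof must control the telescoped sum globally rather than termwise, or proceed by an entirely different decomposition; your alternative sketch (an external-activity-nonincreasing injection on spanning trees) is a more plausible direction, but as you yourself note, the cycle-repair step there is the entire content, and the proposal supplies none of it.
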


\subsection{Degree sequences and line graph edge density}
As one goes upward from the path $P_n$ to the star $S_n$ in Csikv\'ari's order on trees $T$
with $n$ vertices, certain measures of the ``shape" of the graph $G=\Cone(T)$ move upward.
For example, the {\it line graph} 
$$
\Line(G)=
( V_{\Line(G)}, E_{\Line(G)})
$$
for $G=\Cone(T)$
has its edge density $\frac{|E_{\Line(G)}|}{|V_{\Line(G)}|}$ increasing as one goes up.
Also, the weakly decreasing degree sequence 
$d(G)=(d_1 \geq d_2 \geq \cdots \geq d_{n+1})$ moves upward in the {\it dominance} or {\it majorization} order on
partitions.

\begin{question}
For {\bf any} graph $G=(V,E)$, not necessarily a cone on a tree, consider these two parameters:
\begin{itemize}
 \item {\it line graph edge density} $\frac{|E_{\Line(G)}|}{|V_{\Line(G)}|}$, or
\item height of the degree sequence 
$d(G)=(d_1 \geq d_2 \geq \cdots)$ in the dominance order on partitions of $2|E|$.
   \end{itemize}
Do either of these parameters of the graph $G$ exhibit
\begin{itemize}
    \item 
positive correlation with $\mu(G)$, or
\item 
negative correlation with $\tau(G)$
and with the coefficients of $T_G(1,y)$?
\end{itemize}
\end{question}

\section*{Acknowledgements}
The authors thank J. Urschel for showing them a proof of the numerical inequalities
\eqref{Urschel-result}, Angel Chavez and Trevor Karn for their tireless help with {\tt Sage/Cocalc}, and Changxin Ding and two anonymous referees for helpful edits.
\end{section}
\section*{Conflicts of Interest}
All authors declare that they have no conflicts of interest.

\section*{Data Availability Statement}
Data availability is not applicable to this article as no new data were created or analyzed in this study.

\bibliographystyle{alpha}
\bibliography{ref}
    
\end{document}